\documentclass[12 pt]{amsart}

\usepackage{enumerate,xypic,mathdots,mathrsfs,verbatim,amssymb,url}
\usepackage{fullpage}
\usepackage{comment,subfig}
\usepackage[table]{xcolor}
\usepackage{float}

\definecolor{sand}{HTML}{CCD3DB}	
\definecolor{or}{HTML}{F58634}
\definecolor{sea}{HTML}{2E7FAB}
\definecolor{lag}{HTML}{F2DBDC}
\definecolor{lg}{gray}{0.85}
\definecolor{dg}{gray}{0.33}

\numberwithin{equation}{section}

\newtheorem{prop}{Proposition}[section]
\newtheorem{theorem}[prop]{Theorem}
\newtheorem{cor}[prop]{Corollary}
\newtheorem{lemma}[prop]{Lemma}

\theoremstyle{definition}
\newtheorem{defn}[prop]{Definition}
\newtheorem{example}[prop]{Example}

\theoremstyle{remark}
\newtheorem{rem}[prop]{Remark}

\newcommand{\A}{\mathscr{A}}

\newcommand{\C}{\mathbb{C}}

\newcommand{\F}{\mathbb{F}}
\newcommand{\G}{\mathcal{G}}

\newcommand{\Irr}{\text{Irr}}

\newcommand{\R}{\mathbb{R}}

\newcommand{\T}{\mathbb{T}}

\newcommand{\Z}{\mathbb{Z}}

\newcommand{\Norm}[1]{\left\Vert #1 \right\Vert}

\newcommand{\caret}{\char`\^}

\renewcommand{\subset}{\subseteq}

\DeclareMathOperator{\Aut}{Aut}

\DeclareMathOperator{\Hom}{Hom}

\DeclareMathOperator{\mult}{mult}

\DeclareMathOperator{\spn}{span}
\DeclareMathOperator{\ran}{ran}
\DeclareMathOperator{\rank}{rank}

\DeclareMathOperator{\tr}{tr}

\keywords{equiangular tight frames, association schemes, difference sets, group frames, Heisenberg group}
\subjclass[2010]{Primary: 05B10, 42C15, 94C30, Secondary: 20C15, 52C99}

\begin{document}

\title{Optimal line packings from nonabelian groups}
\author{Joseph W.\ Iverson}
\address{Department of Mathematics, University of Maryland, College Park, MD 20742}
\curraddr{Department of Mathematics, Iowa State University, Ames, IA 50011}
\email{jwi@iastate.edu}
\author{John Jasper}
\address{Department of Mathematics and Statistics, South Dakota State University, Brookings, SD 57006}
\email{john.jasper@sdstate.edu }

\author{Dustin G.\ Mixon}
\address{Department of Mathematics, The Ohio State University,
Columbus, OH 43210}
\email{mixon.23@osu.edu}

\date{\today}
\maketitle

\begin{abstract}
We use group schemes to construct optimal packings of lines through the origin.
In this setting, optimal line packings are naturally characterized using representation theory, which in turn leads to a necessary integrality condition for the existence of equiangular central group frames.
We conclude with an infinite family of optimal line packings using the group schemes associated with certain Suzuki 2-groups, specifically, extensions of Heisenberg groups.
Notably, this is the first known infinite family of equiangular tight frames generated by representations of nonabelian groups.
\end{abstract}

%INTRODUCTION============================================================================================================

\section{Introduction} \label{sec:intro}

How does one pack $n$ points in complex projective space $\mathbb{C}\mathbf{P}^{m-1}$ so that the minimum distance is maximized?
At first glance, this problem of \textit{optimal line packings} bears some resemblance to the problem of packing points on a sphere.
The latter problem enjoys famous instances such as the kissing number problem (dating back to Newton~\cite{Newton:66}) and the Tammes problem~\cite{Tammes:30}.
The former problem has been the subject of active research since the seminal work of Conway, Hardin and Sloane~\cite{ConwayHS:96} and the identification by Strohmer and Heath~\cite{SH} of its applications to coding and communication.

Given two points $p,q\in\mathbb{C}\mathbf{P}^{m-1}$, let $a,b\in\mathbb{C}^m$ denote unit-norm representatives of the corresponding lines in $\mathbb{C}^m$, respectively.
Then any reasonable metric on $\mathbb{C}\mathbf{P}^{m-1}$ will take the distance between $p$ and $q$ to be some decreasing function of $|\langle a,b\rangle|$.
For example, the so-called \textit{chordal distance} between $p$ and $q$ is given by $\sqrt{1-|\langle a,b\rangle|^2}$.
In pursuit of an optimal line packing, we therefore seek a sequence $\Phi=\{\phi_i\}_{i=1}^n$ of unit vectors in $\mathbb{C}^m$ that minimizes
\[
\mu(\Phi):=\max_{\substack{i,j\in\{1,\ldots,n\}\\i\neq j}}|\langle \phi_i,\phi_j\rangle|.
\]
We refer to $\mu(\Phi)$ as the \textit{coherence} of $\Phi$.

In order to provably minimize coherence, the most successful approach has been to achieve equality in some lower bound.
To this end, the so-called \textit{Welch bound} has been particularly effective (see~\cite{FMTable} for a survey):
\begin{equation}
\label{eq.Welch bound}
\mu(\Phi)
\geq\sqrt{\frac{n-m}{m(n-1)}}.
\end{equation}
This bound has been rediscovered multiple times~\cite{Rankin:56,Wel74,KabatianskyL:78,ConwayHS:96}, in part because its proof is so simple:
Let $\|\cdot\|_{F}$ be the Frobenius norm. Denoting the $m\times n$ matrix $\Phi=[\phi_1\cdots\phi_n]$ by abuse of notation, we have
\begin{equation}
\label{eq.proof of Welch bound}
0
\leq\left\|\Phi\Phi^*-\frac{n}{m}I\right\|_F^2
=\|\Phi^*\Phi\|_F^2-\frac{n^2}{m}
\leq n+n(n-1)\mu(\Phi)^2-\frac{n^2}{m},
\end{equation}
the last inequality follows from bounding the off-diagonal entries of the Gram matrix $\Phi^*\Phi$ by $\mu(\Phi)$.
Rearranging then gives the Welch bound.

Observe that the proof \eqref{eq.proof of Welch bound} of the Welch bound illuminates exactly when equality occurs:
It is necessary and sufficient that there exist scalars $\alpha$ and $\beta$ such that
\begin{equation}
\label{eq.etf defn}
\Phi\Phi^*=\alpha I
\qquad
\text{and}
\qquad
|\langle \phi_i,\phi_j\rangle|=\beta \text{ whenever } i\neq j.
\end{equation}
(Indeed, $\tr(\Phi\Phi^*)=\tr(\Phi^*\Phi)=n$ forces $\alpha=n/m$, and then $\|\Phi^*\Phi\|_F^2=\|\Phi\Phi^*\|_F^2=n^2/m$ forces $\beta$ to equal the Welch bound.)
Ensembles which satisfy the first requirement in \eqref{eq.etf defn} were historically known as \textit{eutactic stars}~\cite{Schlafli:01}, but the recent literature instead refers to them as \textit{tight frames} due to their significance in frame theory~\cite{DS,DaubechiesGM:86}.
Since the second condition in \eqref{eq.etf defn} imposes equiangularity between the vectors, we refer to Welch bound--equality ensembles as \textit{equiangular tight frames} (ETFs).

For this paper, it is convenient to rescale the vectors in a given ETF so that $\Phi\Phi^*=I$, that is, $\alpha=1$.
In doing so, the norms of our frame vectors become
\begin{equation} \label{eq:FrmNrm}
\Norm{ \phi_i }^2 = \frac{m}{n} \qquad (1 \leq i \leq n),
\end{equation}
and the angle $\beta$ becomes a rescaled version of the Welch bound:
\begin{equation} \label{eq:ParWel}
\beta = \sqrt{\frac{m(n-m)}{n^2(n-1)}}.
\end{equation}
Tight frames with $\alpha=1$ are called \textit{Parseval frames}, since they satisfy the following Parseval-type identity:
\[
\sum_{i=1}^n|\langle x,\phi_i\rangle|^2
=\|x\|^2
\qquad
\text{for all } x\in\mathbb{C}^m.
\]
Note that $\Phi$ is a Parseval frame precisely when $\Phi^*\Phi$ is an orthogonal projection.
Furthermore, given an orthogonal projection $P$, one may take its spectral decomposition to find a Parseval frame $\Phi$ such that $\Phi^*\Phi=P$.
As such, the pursuit of ETFs can be recast as finding orthogonal projections with both a constant diagonal and an off-diagonal of constant modulus.

Some of the earliest known examples of ETFs were \textit{harmonic frames} \cite{SH}, in which $\Phi$ is constructed from the character table of an abelian group by collecting rows indexed by a so-called \textit{difference set} \cite{SH,XZG,DF}.
While harmonic ETFs are the ETFs that exhibit abelian symmetry, the pursuit of new ETFs compels one to seek nonabelian symmetry.
Along these lines, \emph{Zauner's conjecture} in quantum information theory suggests the existence of an ETF of $n=m^2$ vectors in $\mathbb{C}^m$ with Heisenberg symmetry for every $m$~\cite{Z,ScottG:10}.

This paper constructs ETFs with nonabelian symmetry through a generalization of the harmonic ETF theory.
While our construction and its relation to harmonic ETFs can be described in purely group-theoretic terms, we will work in the broader arena of \emph{association schemes}, as detailed in Section 2.
Working in this context offers no additional difficulties, and in our opinion, even simplifies matters.
Every association scheme gives rise to a distinguished set of Parseval frames whose Gram matrices lie in its adjacency algebra.
Characterizing ETFs of this form leads to a notion of \emph{hyperdifference sets}, which unify several disparate ETF constructions, such as harmonic ETFs, those involving skew-symmetric conference matrices~\cite{Renes:07,Strohmer:08}, as well as real ETFs with so-called \textit{centroidal symmetry}~\cite{FickusJMPW:15}.
Section~3 then focuses on a particular type of association schemes, namely, \emph{group schemes}.
Here, hyperdifference sets enjoy a characterization in terms of the representations of the underlying group, and the resulting ETFs are known as \textit{central group frames} (introduced in~\cite{VW}).
Furthermore, this characterization leads to a necessary integrality condition on the existence of such hyperdifference sets, which we then use to significantly reduce the search space in certain cases.

The second half of this paper focuses on a new example of the theory developed in Sections~2 and~3.
Specifically, we discover an infinite family of ETFs by studying the group schemes associated with Suzuki 2-groups~\cite{Hig63}.
The reader primarily interested in our new construction of nonabelian group frames can safely start here and look back to Sections~2 and~3 as necessary.
In Section~4, we use character theory to establish the existence of these ETFs, and in Section~5, we explicitly construct the ETFs out of copies of an extension of the Heisenberg group.
This is particularly interesting in light of Zauner's conjecture~\cite{Z,ScottG:10}, though instead of $n=m^2$, our construction takes
\[
m=2^{2k}(2^{2k+1}-1),
\qquad
n=2^{2(2k+1)}
\]
for any positive integer $k$.

%%ASSOCIATION SCHEMES=====================================================================================================
\section{Association schemes and hyperdifference sets} \label{sec:assoc}

In the case of abelian group frames, ETFs are in one-to-one correspondence with difference sets. For nonabelian groups there is a notion of difference set, however there is no obvious connection to ETFs arising as group frames from nonabelian groups. In this section we will present a generalization of difference sets which we call \textit{hyperdifference sets}. The theory of hyperdifference sets is naturally presented in the context of association schemes, which has the pleasing consequence of placing harmonic ETFs, real ETFs with centroidal symmetry, and the ETFs in ~\cite{Renes:07} and~\cite{Strohmer:08} within the same theory.

%review
\subsection{Review of association schemes}

\begin{defn}
An \emph{association scheme} is a pair $\mathfrak{X} = (X, \{R_i\}_{i=0}^d)$ consisting of a finite set $X$ and relations $R_i \subset X \times X$, with the following properties:
\begin{enumerate}[(R1)]
\item $\{R_i\}_{i=0}^d$ is a partition of $X \times X$.
\item Some relation, say $R_0$, is the identity $R_0 = \{ (x,x)  : x \in X \}$.
\item For each $i \in \{ 0, \dots, d \}$, there is some $i'$ such that $R_{i'}= \{ (y,x)  : (x,y) \in R_i \}$.
\item For any $i,j,k\in\{0,\dotsc, d\}$, there is a constant $p_{i,j}^k$ such that, for any $(x,y) \in R_k$,
\[ \left|\{z \in X  : (x,z) \in R_i \text{ and } (z,y) \in R_j\}\right| = p_{i,j}^k. \]
These constants are the \emph{intersection numbers} of the scheme.
\item For any $i,j,k \in \{0,\dotsc,d\}$, we have $p_{i,j}^k = p_{j,i}^k$.
\end{enumerate}
We refer to nonidentity relations as \textit{classes}, so that $d$ is the number of classes.
The constant $k_i = p_{i,i'}^0 = p_{i',i}^0$ is called the \emph{valency} of $R_i$.
The scheme is \emph{symmetric} if each $R_i$ is a symmetric relation.
If properties (R1)--(R4) hold but (R5) fails, we have a \emph{non-commutative} association scheme.
\end{defn}

We associate each relation $R_i$ with an \emph{adjacency matrix} $A_i \in M_n(\C)$ whose $(j,k)$ entry is
\[ A_i(j,k) = \begin{cases}
1, & \text{if } (x_j, x_k) \in R_i \\
0, & \text{if } (x_j, x_k) \notin R_i.
\end{cases} \]
The axioms of an association scheme place equivalent conditions on the adjacency matrices:
\begin{enumerate}
\item[(A1)] $A_0 + \dotsb + A_d = J$, the all-ones matrix.
\item[(A2)] One of the adjacency matrices, say $A_0$, is the identity matrix.
\item[(A3)] The set $\{A_0,\dotsc, A_d\}$ is closed under taking transposes.
\item[(A4)] The adjacency matrices span a subalgebra $\A \subset M_n(\C)$, called the \emph{adjacency algebra} or the \emph{Bose-Mesner algebra}.
Specifically, for every $i,j \in \{0,\dotsc,d\}$ there are numbers $p_{i,j}^k$ (necessarily nonnegative integers) such that
\[ A_i A_j = \sum_{k=0}^d p_{i,j}^k A_k. \]
\item[(A5)] The adjacency algebra is commutative.
\end{enumerate}
The \textit{valency} $k_i$ is the constant number of 1's in any row or column of $A_i$.
The scheme is \textit{symmetric} if and only if $A_i^T = A_i$ for every $i$.

\begin{example}\label{ex:AbSch}
Let $G$ be a finite abelian group. Each element $g \in G$ determines a relation
\[ R_g = \{ (h,k) \in G \times G  : hk^{-1} = g \}, \]
and $\mathfrak{X} := (G,\{R_g\}_{g\in G})$ is an association scheme whose adjacency matrices $\{A_g\}_{g\in G}$ describe the translation operators on $L^2(G)$. The adjacency algebra is isomorphic to the group algebra $\C[G]$ through the mapping $A_g \mapsto \delta_g$, the latter being a canonical basis element of $\C[G]$. Every relation in $\mathfrak{X}$ has valency 1, so the scheme is called \emph{thin}. Conversely, given a thin association scheme we see that the set of adjacency matrices forms a group. The association scheme arises from this group as described above.
\end{example}

Non-symmetric association schemes were first axiomatized by Delsarte in his PhD thesis \cite{Del73}. We recommend \cite{BI} for a thorough introduction. 

Let $\mathfrak{X} = (X,\{R_i\}_{i=0}^d)$ be an association scheme with $|X| = n$, and let $A_0,\dotsc,A_d$ be its adjacency matrices. They form a basis for the adjacency algebra $\A$, which is a commutative $*$-algebra. Putting together (A3) and (A5), we see that the adjacency matrices form a commuting set of normal operators. By the spectral theorem, the adjacency algebra has a second basis of mutually orthogonal primitive idempotents $E_0,\dotsc,E_d$, i.e., the set of projections onto the maximal eigenspaces of the adjacency matrices, and hence the maximal eigenspaces of all of $\mathscr{A}$. Since $\spn\{(1,\dotsc,1)\}$ is a maximal eigenspace of $J \in \A$, one of the idempotents is projection onto that space, and we may assume that $E_0 = (1/n)J$.

In Example \ref{ex:AbSch} we saw that the set of adjacency matrices of an association scheme can be thought of as a generalization of an abelian group. We see in the next example that the basis of idempotents plays the role of the dual group.

\begin{example} \label{ex:AbEig}
When our scheme comes from a finite abelian group $G$, the primitive idempotents are indexed by the Pontryagin dual $\widehat{G}$, which consists of all homomorphisms $\alpha \colon G \to \T$ from $G$ to the multiplicative group of unimodular complex numbers.  With each character $\alpha \in \widehat{G}$, we associate a projection
\[ E_\alpha = \frac{1}{|G|} \sum_{g\in G} \alpha(g) A_g \in \A \]
of rank $m_\alpha = 1$.
\end{example}

The complex vector space spanned by $A_0,\dotsc,A_d$ admits another multiplication which plays an important role in our theory. The \emph{Hadamard product} of two matrices $M_1,M_2 \in M_n(\C)$ is obtained by entry-wise multiplication:
\[ (M_1 \circ M_2)(i,j) = M_1(i,j)\cdot M_2(i,j) \qquad (1\leq i,j \leq n). \]
The corresponding involution is entry-wise complex conjugation, which we denote by
\[ \overline{M}(i,j) = \overline{M(i,j)} \qquad (M \in M_n(\C),\, 1 \leq i,j \leq n). \]
The span of $A_0,\dotsc,A_d$ is closed under both of these operations, since $\overline{A_i} = A_i$ and
\begin{equation} \label{eq:AdjIdem}
A_i \circ A_j = \delta_{i,j} A_i \qquad (i,j \in \{0,\dotsc,d\}).
\end{equation}
To distinguish between Hadamard and matrix multiplication, we write $\widehat{\A}$ for the $*$-algebra spanned by $A_0,\dotsc,A_d$ with the Hadamard product and complex conjugation. Its multiplicative identity is $J = n E_0$. The projection matrices $E_0,\dotsc,E_d$ still form a linear basis for $\widehat{\A}$, and they behave well under its involution: for every $i \in \{0,\dotsc,d\}$, there is some $\hat{\imath} \in \{0,\dotsc,d\}$ such that  $E_{\hat{\imath}} = \overline{E_i} = E_i^T$.

The algebras $\A$ and $\widehat{\A}$ are dual in a certain sense; cf.\ \cite[Ch.\ II, \S5]{BI}. Under this duality, the intersection numbers of $\A$ roughly correspond with the \emph{Krein parameters} of $\widehat{\A}$, which are the unique constants $q_{i,j}^k \in \C$ such that
\[ E_i \circ E_j = \frac{1}{n} \sum_{k=0}^d q_{i,j}^k E_k \qquad (0\leq i,j \leq d). \]
The \emph{Krein condition} says that $q_{i,j}^k \geq 0$ for all $i,j,k\in\{0,\dotsc,d\}$ \cite[Theorem 3.8]{BI}. 

Having reviewed the basics of association schemes, we are now ready to discuss their application to the construction of equiangular tight frames.

\subsection{Parseval frames and hyperdifference sets}

If $\mathfrak{X}$ is an association scheme with primitive idempotents $E_{0},\ldots,E_{d}$, then for any $D\subset \{0,\ldots,d\}$ we form the operator
\[\mathcal{G}_{D} = \sum_{j\in D} E_{j}.\]
Since the maximal idempotents are mutually orthogonal, if follows that $\mathcal{G}_{D}$ is a projection, and hence the Gram matrix of some Parseval frame. Set $m_{i} = \rank E_{i}$ and $m_{D} = \rank\mathcal{G}_{D} = \sum_{i\in D}m_{i}$.

Our goal is not just to create tight frames, but equiangular tight frames. Again, looking at the case of abelian groups is instructive.

\begin{example}\label{ex:HarmFrm}
Let $G$ be a finite abelian group, as in Examples~\ref{ex:AbSch} and \ref{ex:AbEig}. For any $D \subset \widehat{G}$, the matrix
\[ \mathcal{G}_D = \sum_{\alpha \in D} E_\alpha \]
is the $G \times G$ matrix with entries
\[ (\mathcal{G}_D)_{g,h} = \frac{1}{|G|} \sum_{\alpha \in D} \alpha(hg^{-1}) = \frac{1}{|G|} \sum_{\alpha \in D} \alpha(h)\overline{\alpha(g)}\qquad (g,h\in G). \]
This is the Gram matrix of the \emph{harmonic frame} (introduced in~\cite{hochwald2000systematic,goyal2001quantized})
\[ \Phi_D = |G|^{-1/2} \left( \alpha(g) \right)_{\alpha \in D,\, g \in G}, \]
which is obtained by extracting the rows indexed by $D$ from the discrete Fourier transform (DFT) matrix for $G$.
In this context, the choices of $D$ that lead to ETFs are called \textit{difference sets}. Note that $\mathcal{G}_{D}$ is the Gram matrix of an ETF if and only if the off-diagonal entries have constant modulus. This is the motivation for the following definition.
\end{example}

\begin{defn} Let $\mathfrak{X}$ be an association scheme, and let $E_{0},\ldots,E_{d}$ be the basis of primitive idempotents for the adjacency algebra $\mathscr{A}$. A set $D\subset \{0,\ldots,d\}$ is called a \textit{hyperdifference set} if the off-diagonal entries of the matrix $\mathcal{G}_{D}$ all have equal modulus.
\end{defn}

The ``hyper'' here comes from the fact that a hyperdifference set is a subset of the primitive idempotents in the dual adjacency algebra of an association scheme. While this set is a linear basis for the algebra $\widehat{\A}$, it also forms a hypergroup. See Remark~\ref{rem:hyp} below.

In the next example we see that a large class of real ETFs come from hyperdifference sets. In particular, ETFs with centroidal symmetry \cite{FickusJMPW:15}, also known as regular ETFs \cite{BGOY:15}, come from $2$-class association schemes related to certain strongly regular graphs.

\begin{example}\label{ex:RealRegETF} A collection of symmetric $v\times v$ matrices $\{I,A_1,A_2\}$ are the adjacency matrices of a $2$-class association scheme if and only if $A_{1}$ and $A_{2}$ are the adjacency matrices of complementary strongly regular graphs on $v$ vertices. Let $A_{1}$ be the adjacency matrix of a strongly regular graph with parameters $v,k,\lambda,$ and $\mu$, that is, $A_{1}^2+(\mu-\lambda)A_{1}+(\mu-k)I = \mu J$. In this case, the primitive idempotents are given by
\[E_{0} = \frac{1}{v}J,\qquad E_{1} = \frac{\lambda_{-}-k-\lambda_{-}v}{v(\lambda_{+}-\lambda_{-})}I + \frac{v-k+\lambda_{-}}{v(\lambda_{+}-\lambda_{-})}A_{1} + \frac{\lambda_{-}-k}{v(\lambda_{+}-\lambda_{-})}A_{2},\qquad E_{2}=I-E_{0}-E_{1}.\]
where $\lambda_{\pm}=\frac{1}{2}\big[(\lambda-\mu)\pm\sqrt{(\lambda-\mu)^2+4(k-\mu)}\big]$. 

It turns out that this association scheme yields a nontrivial hyperdifference set (i.e.\ distinct from $\{0\}$, $\{1,2\}$, and $\{0,1,2\}$) if and only if	 $2k-v$ is either $2\lambda_{+}$ or $2\lambda_{-}$. If $2k-v=2\lambda_{-}$, then $E_{1}$ and $E_{0}+E_{2}=I-E_{1}$ are Gram matrices of ETFs. That is, $\{1\}$ and $\{0,2\}$ are both hyperdifference sets. Alternatively, if $2k-v=2\lambda_{+}$, then $E_{2}$ and $E_{0}+E_{1} = I-E_{2}$ are Gram matrices of ETFs. That is, $\{2\}$ and $\{0,1\}$ are hyperdifference sets.
\end{example}

\begin{rem}

The association scheme approach outlined above goes back to the work of Delsarte, Goethels, and Seidel on real spherical $t$-designs with few angles~\cite{DGS}.
Specifically, let $\Phi = \{ \varphi_i \}_{i=1}^n \in \mathbb{R}^{m \times n}$ be a sequence of real, equiangular, unit-norm vectors, and let $\mu = | \langle \varphi_i, \varphi_j \rangle |$ for $i \neq j$.
If $\Phi$ is a spherical 2-design, then \cite[Theorem~7.4]{DGS} implies that $\Phi^T\Phi$ carries a 2-class association scheme $\mathfrak{X} = \{ I,A_1,A_2 \}$, in the sense that $\Phi^T \Phi = {I + \mu A_1 - \mu A_2}$.
By~\cite[Proposition 1.2]{HP}, $\Phi$ is a spherical 2-design if and only if it is a tight frame and $\sum_{i=1}^n \varphi_i = 0$.
This is the case for the ETFs in Example~\ref{ex:RealRegETF} corresponding to the hyperdifference sets $\{1\}$ and $\{2\}$, by \cite[Theorem~3.1(b)(ii)]{FickusJMPW:15}.
There are many examples, including several infinite families with $n<\frac{m(m+1)}{2}$ \cite{FickusJMPW:15}.

On the other hand, every real ETF $\Phi$ gives rise to a 3-distance spherical 3-design $\widetilde{\Phi} := [\Phi, -\Phi]$, by~\cite[Example~8.3]{DGS}. 
Then the Gram matrix of $\widetilde{\Phi}$ carries a symmetric, 3-class association scheme by~\cite[Theorem~7.4]{DGS}.
Coming from the other direction, one can start with a symmetric, 3-class association scheme and comb through the idempotents in its adjacency algebra to find $\widetilde{\Phi}^T \widetilde{\Phi}$.
Then one can recover $\Phi$ from $\widetilde{\Phi}$ by discarding a vector from each pair $\{ \varphi_i, -\varphi_i\}$.
In this sense, every real ETF arises from a projection in the adjacency algebra of an association scheme.
For more details and an analogue for complex ETFs, we refer to our followup papers~\cite{IJM2,IM:DTI}.
In contrast, the present article focuses exclusively on ETFs $\Phi$ for which $\Phi^*\Phi$ already lies in the adjacency algebra of an association scheme.

Finally, the complex ETFs which are the focus of this article have considerably less overlap with $t$-designs.
If $\Phi \in \mathbb{C}^{m\times n}$ is a sequence of complex equiangular unit-norm vectors, then it is a complex 2-design (in the sense of Roy~\cite{ROY}) if and only if $n=m^2$, by~\cite[Corollary 3.5.15]{ROY}.
One can also consider \emph{projective} 2-designs, but the same obstruction occurs, as any projective 2-design for $\mathbb{C}\mathbf{P}^{m-1}$ requires at least $m^2$ lines~\cite{BaHo}.
By way of comparison, a complex ETF in $\mathbb{C}^m$ can have at \emph{most} $m^2$ vectors \cite[Proposition 3.4]{HP}, and only finitely examples are currently known to saturate this bound.
(This is the subject of Zauner's conjecture~\cite{Z}, which is still unresolved as of this writing.)
Overall, there is very little overlap between complex ETFs and $t$-designs with few angles.
However, given the similarities between these notions, it is perhaps unsurprising that association schemes give rise to many interesting examples of complex ETFs.

\end{rem}

\begin{prop} \label{thm:AsDiff}
Given $D \subset \{0,\dotsc,d\}$, define constants
\[ b_k := \sum_{i,j \in D} q_{i,\hat{\jmath}}^k \qquad (0 \leq k \leq d). \]
Then the following are equivalent:
\begin{enumerate}[(i)]
\item $D$ is a hyperdifference set
\item There are constants $C_1,C_2\geq 0$ such that
\begin{equation} \label{eq:AsDiff1}
\mathcal{G}_{D} \circ \overline{\mathcal{G}_{D}} = C_1 E_0 + C_2 \sum_{k=0}^d E_k.
\end{equation}
\item $b_1 = b_2 = \dotsb = b_d$.
\end{enumerate}
If one of the equivalent conditions (i)--(iii) holds, then
\begin{equation}\label{eq:AsDiff2} C_1 = \frac{m_D(n-m_D)}{n(n-1)} \qquad \text{and} \qquad  b_k = nC_2 = \frac{m_D(m_D-1)}{n-1} \quad \text{for }k\in\{1,\dotsc,d\}. \end{equation}
\end{prop}

The equivalence of $(i)$ and $(ii)$ above shows that our hyperdifference sets are in some sense dual to the association scheme difference sets as introduced by Godsil~\cite{godsil}. In particular, Godsil's difference sets use adjacency matrices, matrix multiplication, and conjugate transposes where we have primitive idempotents, Hadamard multiplication, and entrywise complex conjugation, respectively, in $(ii)$.

\begin{proof}[Proof of Proposition~\ref{thm:AsDiff}] Since $\mathcal{G}_{D}\circ\overline{\mathcal{G}_{D}}$ is the entrywise square of $\mathcal{G}_{D}$, equivalence of $(i)$ and $(ii)$ follows from the observation that $nE_{0}$ is the all-ones matrix, and $\sum_{j=0}^{d}E_{j}$ is the identity matrix.

For the equivalence of $(ii)$ and $(iii)$, first note that
\begin{equation}\label{eq:AsDiff3}\mathcal{G}_{D}\circ\overline{\mathcal{G}_{D}} = \left(\sum_{i\in D} E_{i}\right)\circ\left(\sum_{j\in D} \overline{E_{j}}\right) = \sum_{i,j\in D}\big(E_{i}\circ\overline{E_{j}}\big) = \sum_{i,j\in D}\frac{1}{n}\sum_{k=0}^{d}q_{i,\hat{\jmath}}^{k}\, E_{k} = \frac{1}{n}\sum_{k=0}^{d}b_{k}E_{k}.\end{equation}

Assume $(ii)$ holds. The matrices $\{E_{j}\}_{j=0}^{d}$ are a basis for the adjacency algebra $\A$. Equating coefficients in \eqref{eq:AsDiff1} and \eqref{eq:AsDiff3} we see that $b_{k} = nC_{2}$ for $k\in\{1,\ldots,d\}$, that is, $(iii)$ holds. Finally, if $(iii)$ holds, then from \eqref{eq:AsDiff3} we see that $(ii)$ holds with $C_{2} = b_{k}/n$ for $k\in\{1,\ldots,d\}$.

Finally, assume that $(i)$--$(iii)$ hold. The matrix $\mathcal{G}_{D}$ is an $n\times n$ projection of rank $m_{D}$ with constant diagonal equal to $m_{D}/n$, and hence
\[\frac{m_{D}}{n}=(\mathcal{G}_{D})_{1,1} = (\mathcal{G}_{D}^{2})_{1,1} = (\mathcal{G}_{D})_{1,1}^{2} + \sum_{j=2}^{n}|(\mathcal{G}_{D})_{1,j}|^{2} = \Big(\frac{m_{D}}{n}\Big)^{2} + (n-1)\frac{C_{1}}{n}.\]
Solving for $C_{1}$ and using the observation from \eqref{eq:AsDiff1} that $C_{1}/n+C_{2} = (m_{D}/n)^2$ we obtain \eqref{eq:AsDiff2}.
\end{proof}

While the preceding proof is very simple, Proposition~\ref{thm:AsDiff} contains the following nontrivial correspondence between harmonic ETFs and difference sets.

\begin{example} \label{ex:AbDif}
Let $G$ be an abelian group, as in Examples~\ref{ex:AbSch}, \ref{ex:AbEig}, and \ref{ex:HarmFrm}. The Pontryagin dual $\widehat{G}$ is itself an abelian group under pointwise multiplication, with
\[ (\alpha \beta)(g) = \alpha(g) \beta(g) \quad \text{and} \quad (\alpha^{-1})(g) = \overline{\alpha(g)} \]
for all $\alpha, \beta \in \widehat{G}$ and $g\in G$. The Hadamard product reflects this multiplication on the primitive idempotents, so that
\[ E_\alpha \circ E_\beta = \frac{1}{|G|} E_{\alpha \beta} \quad \text{and} \quad  \overline{E_\alpha} = E_{\alpha^{-1}} = E_{\widehat{\alpha}} \]
for all $\alpha,\beta \in \widehat{G}$. Hence, the mapping $E_\alpha \mapsto
 |G|^{-1}\delta_\alpha$ gives a $*$-algebra isomorphism $\widehat{\A} \cong \C[\widehat{G}]$, and the Krein parameters are given by $q_{\alpha,\beta}^\gamma = \delta_{\alpha \beta, \gamma}$ for all $\alpha,\beta,\gamma \in \widehat{G}$.

Given a subset $A \subset \widehat{G}$, let $\widetilde{A}, \widetilde{A}^{(-1)} \in \C[\widehat{G}]$ be the vectors
\[ \widetilde{A} = \sum_{\alpha \in A} \delta_\alpha\qquad \text{and} \qquad \widetilde{A}^{(-1)} = \sum_{\alpha \in A} \delta_{\alpha^{-1}}. \]
For fixed $D \subset \widehat{G}$, we can read \eqref{eq:AsDiff1} to be the statement
\begin{equation} \label{eq:AbDif}
\widetilde{D} \widetilde{D}^{(-1)} = C_1 \delta_1 + C_2 \widetilde{\widehat{G}}.
\end{equation}
Meanwhile, the constants $\{b_\gamma\}_{\gamma \in \widehat{G}}$ are given by
\[ b_\gamma = \sum_{\alpha,\beta \in D} \delta_{\alpha\beta^{-1}, \gamma} = \left|\{(\alpha, \beta) \in D \times D  : \alpha\beta^{-1} = \gamma\}\right| \qquad (\gamma \in \widehat{G}). \]
Thus, Proposition~\ref{thm:AsDiff} tells us that $\Phi_D$ is an equiangular tight frame if and only if there is a constant $\lambda$ such that every nontrivial element of $\widehat{G}$ can be written as a difference of elements of $D$ in exactly $\lambda$ ways. In other words, $\Phi_D$ is an ETF if and only if $D$ is a \emph{difference set} in $\widehat{G}$. See \cite{SH,XZG,DF}.
By counting, we observe that $\widehat{G}$ admits a difference set of size $m$ only if $|\widehat{G}|-1$ divides $m(m-1)$, the quotient being $\lambda$.
\end{example}

\begin{rem} \label{rem:hyp}
The situation for general commutative association schemes closely mirrors the previous example if we use the language of hypergroups \cite{Wi97}. Briefly, if we define $a_i = k_i^{-1} A_i$, then $K = \{ a_i \}_{i=0}^d$ has the structure of a commutative hypergroup under matrix multiplication, with involution given by adjoints. As in the case of abelian groups, we must look to the dual hypergroup $\widehat{K}$ to construct ETFs. Setting $e_j = m_j^{-1} E_j$, we have $\widehat{K} = \{ e_j \}_{j=0}^d$ under Hadamard multiplication and entry-wise complex conjugation. Its identity element is $e_0$, and its measure algebra, akin to the group algebra $\C[\widehat{G}]$ in Example~\ref{ex:AbDif}, is $\widehat{\A}$. The Haar measure in $\widehat{K}$ is given by $\mu(e_j)= m_j$; in the abelian group case, this was constantly equal to 1. In parallel with Example~\ref{ex:AbDif}, we associate any subset $A \subset \{0,\dotsc,d\}$ with the vectors $\G_A, \overline{\G_A} \in \widehat{\A}$, which satisfy
\[ \G_A = \sum_{j\in A} \mu(e_j) e_j \qquad \text{and} \qquad \overline{\G_A} = \sum_{j\in A} \mu(e_{\hat{\jmath}}) e_{\hat{\jmath}}. \]
These take the places of $\widetilde{A}$ and $\widetilde{A}^{(-1)}$, respectively. Then Proposition~\ref{thm:AsDiff} tells us that $D\subset \{0,\dotsc,d\}$ is a hyperdifference set if and only if there are constants $c_1,c_2 \geq 0$ such that 
\[ \G_D \circ \overline{\G_D} = c_1 e_0 + c_2 \G_{\widehat{K}}. \]
This is the hypergroup version of \eqref{eq:AbDif}.
\end{rem}

%NONABELIAN ETFS=========================================================================================================
\section{Central group frames and the group scheme} \label{sec:grps}

Given any finite group $G$ with conjugacy classes $C_{0},\ldots,C_{d}$ we define an association scheme $\mathfrak{X}(G)$, called the \textit{group scheme}, whose relations are given by
\begin{equation}\label{def:groupscheme} R_i=\{(g,h) \in G \times G : hg^{-1} \in C_i\} \qquad (0 \leq i \leq d), \end{equation}
with valencies $k_i = |C_i|$.

Let $\Irr(G)$ denote the set of irreducible characters of a group $G$. For each $\chi\in\Irr(G)$ let $d_{\chi}:= \chi(1)$ denote its degree, and let $\pi_{\chi}:G\to M_{d_{\chi}}(\C)$ denote a unitary representation having $\chi$ as its trace character.

The set of primitive idempotents for $\mathfrak{X}(G)$ are given by
\begin{equation}\label{GrpIdem}
(E_{\chi})_{g,h} = \frac{d_\chi}{|G|}\chi(g^{-1}h).
\end{equation}
for any $\chi\in\Irr(G)$ \cite[Theorem 10.6.1]{godsil}. From the definition of $\mathcal{G}_{D}$ it follows that
\begin{equation} \label{eq:GrpGrm}
(\mathcal{G}_D)_{g,h} = \frac{1}{|G|} \sum_{\chi\in D} d_{\chi} \chi(g^{-1}h) \qquad (g,h \in G).
\end{equation}

 We regard $M_{d}(\C)$ as a Hilbert space with the Hilbert-Schmidt inner product $\langle A,B\rangle_{HS} = \tr(AB^{\ast})$. For each representation $\pi_{\chi}$ we define another unitary representation $\rho_{\chi}:G\to U(M_{d_{\chi}}(\C))$ by
\[\rho_{\chi}(g)(A) = \pi_{\chi}(g)\cdot A \qquad \left(g\in G,\, A \in M_{d_\chi}(\C) \right).\]

The following proposition gives an explicit description of the frames made by $\mathfrak{X}(G)$. That is, we give an explicit frame with Gram matrix $\mathcal{G}_{D}$ for $D\subset\Irr(G)$. 

\begin{prop}\label{thm:GrpFrm} Let $G$ be a finite group, and let $D\subset \Irr(G)$. Define the sequence $\Phi_{D}: = \{\phi_{g}\}_{g\in G}$ by
\[\phi_{g}: = |G|^{-1/2}\left(\sqrt{d_{\chi}}\pi_{\chi}(g)\right)_{\chi\in D}\in \bigoplus_{\chi\in D}M_{d_{\chi}}(\C) =: \mathcal{H}_{D}. \]
Then, $\Phi_{D}$ is a Parseval frame for the space $\mathcal{H}_{D}$ of dimension $m_{D}:= \sum_{\chi\in D}d_{\chi}^{2}$. The Gram matrix of $\Phi_{D}$ is $\mathcal{G}_{D}$, and $\Phi_{D}$ is the orbit of $\phi_{1}$ under $\rho_{D}: = \bigoplus_{\chi\in D}\rho_{\chi}$.
\end{prop}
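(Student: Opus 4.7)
The plan is to prove the four assertions of the proposition in order, where each step is essentially a direct calculation; the only nontrivial ingredient is the Schur orthogonality relations for matrix coefficients.

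First, the dimension of $\mathcal{H}_D$ is immediate: $\dim M_{d_\chi}(\C) = d_\chi^2$, so $\dim \mathcal{H}_D = \sum_{\chi\in D} d_\chi^2 = m_D$.

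Next I would verify the orbit description. Since $\pi_\chi(1) = I_{d_\chi}$, the vector $\phi_1$ has $\chi$-component $|G|^{-1/2}\sqrt{d_\chi}\, I_{d_\chi}$, so acting by $\rho_D(g)$ left-multiplies the $\chi$-component by $\pi_\chi(g)$, giving $|G|^{-1/2}\sqrt{d_\chi}\, \pi_\chi(g) = (\phi_g)_\chi$ as required.

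For the Gram matrix entries, a direct computation using $\pi_\chi(g)^{*} = \pi_\chi(g^{-1})$ and the definition of the trace character $\chi = \tr\circ\pi_\chi$ gives
\[
\langle \phi_h, \phi_g\rangle_{\mathcal{H}_D} = \frac{1}{|G|}\sum_{\chi\in D} d_\chi \tr\bigl(\pi_\chi(h)\pi_\chi(g)^{*}\bigr) = \frac{1}{|G|}\sum_{\chi\in D} d_\chi\, \chi(g^{-1}h),
\]
which matches $(\mathcal{G}_D)_{g,h}$ in \eqref{eq:GrpGrm}.

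The main substantive step is the Parseval identity, for which I would invoke Schur orthogonality. For $x = (A_\chi)_{\chi\in D}\in\mathcal{H}_D$, expand
\[
\sum_{g\in G}|\langle x,\phi_g\rangle|^2 = \frac{1}{|G|}\sum_{\chi,\psi\in D}\sqrt{d_\chi d_\psi}\sum_{i,j,k,l}(A_\chi)_{ij}\overline{(A_\psi)_{kl}}\sum_{g\in G}\overline{\pi_\chi(g)_{ij}}\,\pi_\psi(g)_{kl}.
\]
The Schur orthogonality relations
\[
\sum_{g\in G}\overline{\pi_\chi(g)_{ij}}\,\pi_\psi(g)_{kl} = \frac{|G|}{d_\chi}\,\delta_{\chi,\psi}\delta_{i,k}\delta_{j,l}
\]
collapse the inner sum to $(|G|/d_\chi)\delta_{\chi,\psi}\|A_\chi\|_{HS}^2$, giving $\sum_g |\langle x,\phi_g\rangle|^2 = \sum_{\chi\in D}\|A_\chi\|_{HS}^2 = \|x\|^2$. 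This shows $\Phi_D$ is a Parseval frame for $\mathcal{H}_D$, completing the proof. The only conceivable obstacle is keeping index bookkeeping straight in the Schur orthogonality step, but the computation is otherwise routine.
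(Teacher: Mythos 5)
Your proposal is correct; every step checks out, including the Schur orthogonality bookkeeping (the representations $\pi_\chi$ for distinct $\chi\in D$ are inequivalent, so the cross terms vanish and the diagonal terms contribute $\frac{|G|}{d_\chi}\delta_{i,k}\delta_{j,l}$ exactly as you state). The Gram matrix computation and the orbit verification are the same as in the paper. Where you diverge is the Parseval step: the paper does not verify the Parseval identity directly. Instead it observes that the Gram matrix you computed is $\mathcal{G}_D=\sum_{\chi\in D}E_\chi$, which is already known to be an orthogonal projection because the $E_\chi$ are the mutually orthogonal primitive idempotents of the group scheme (equation \eqref{GrpIdem}); it then computes $\tr(\mathcal{G}_D)=\sum_{\chi\in D}d_\chi^2=m_D=\dim\mathcal{H}_D$ to conclude that the projection has full rank on $\mathcal{H}_D$, hence $\Phi_D$ is a Parseval frame for all of $\mathcal{H}_D$. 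Your route unpacks that packaged fact and reproves it from Schur orthogonality of matrix coefficients; this makes the proposition self-contained and independent of the association-scheme framework of Section~2, at the cost of a longer index computation. The paper's route is shorter and keeps the result tied to the scheme-theoretic viewpoint (Gram matrix lies in the adjacency algebra, is a sum of idempotents, hence a projection), which is the viewpoint it exploits in the rest of Section~3. Both are valid, and ultimately both rest on the same orthogonality relations.
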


\begin{proof} For $g,h\in G$ we begin by computing the $(g,h)$ entry of the Gram matrix of $\Phi_{D}$:
\begin{align*}
\langle \phi_{h},\phi_{g}\rangle & = \frac{1}{|G|}\sum_{\chi\in D}d_{\chi}\langle \pi_{\chi}(h),\pi_{\chi}(g)\rangle_{HS} = \frac{1}{|G|}\sum_{\chi\in D}d_{\chi}\tr(\pi_{\chi}(h)\pi_{\chi}(g)^{\ast})\\
 & = \frac{1}{|G|}\sum_{\chi\in D}d_{\chi}\tr(\pi_{\chi}(g^{-1}h)) = \frac{1}{|G|}\sum_{\chi\in D}d_{\chi}\chi(g^{-1}h) = (\mathcal{G}_{D})_{g,h}.
\end{align*}

Note that the dimension of $\mathcal{H}_{D}$ is $m_{D}$, and using \eqref{eq:GrpGrm} we have
\[\tr(\mathcal{G}_{D}) = \sum_{g\in G}\frac{1}{|G|} \sum_{\chi\in D} d_{\chi} \chi(g^{-1}g) = \sum_{g\in G}\frac{1}{|G|} \sum_{\chi\in D}d_{\chi}^{2} = \sum_{\chi\in D}d_{\chi}^{2}.\]
Since  $\mathcal{G}_{D}$ is a projection, this shows that the rank of $\mathcal{G}_{D}$ is $m_{D}$.

The claim that $\Phi_{D}$ is the orbit of $\phi_{1}$ under $\rho$ follows from 
\[\rho(g)\phi_{1} = |G|^{-1/2}(\sqrt{d_{\chi}}\pi_{\chi}(g)\pi_{\chi}(1))_{\chi\in D} = \phi_{g}.\qedhere\]
\end{proof}

This construction of $\Phi_D$ is a special case of the generalized harmonic frames introduced separately by the first author \cite{I2} and by Thill and Hassibi \cite{TH}. 

Let $\rho\colon G \to U(\C^m)$ be a unitary representation of $G$, and fix a vector $\phi \in \C^m$. If $\Phi = \left\{ \rho(g)\phi \right\}_{g \in G}$ happens to be a frame, we call it a \emph{group frame}. Its properties can be deduced from the associated \textit{function of positive type} $\psi\in L^{2}(G)$ given by $\psi(g) = \langle \phi,\rho(g)\phi\rangle$ for $g\in G$. One of the important consequences of Proposition \ref{thm:GrpFrm} is that a frame made from a group scheme $\mathfrak{X}(G)$ is a group frame, but more can be said. When $\psi$ lies in the center of the convolution algebra $L^2(G)$, that is, the span of $\Irr(G)$, then the frame is called \emph{central}. This terminology is due to Vale and Waldron \cite{VW}. The following corollary recasts \cite[Theorem~5.1]{VW} in the language of association schemes. It says that central group frames are exactly the frames we are constructing from $\mathfrak{X}(G)$.

\begin{cor} \label{cor:CntFrm}
Let $G$ be a finite group. A Parseval frame has Gram matrix $\G_D$ for some $D \subset \Irr(G)$ if and only if it is a central group frame over $G$.
\end{cor}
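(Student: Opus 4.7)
The plan is to prove each direction using Proposition~\ref{thm:GrpFrm} together with the structure of the adjacency algebra of $\mathfrak{X}(G)$.

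For the ``if'' direction, I would start from a central group frame $\Psi=\{\rho(g)\phi\}_{g\in G}$, where $\psi(g)=\langle\phi,\rho(g)\phi\rangle$ lies in the center of $L^{2}(G)$, i.e., in the span of $\Irr(G)$. I would compute the entries of the Gram matrix using unitarity of $\rho$:
\[
(\Psi^{\ast}\Psi)_{g,h}=\langle\rho(h)\phi,\rho(g)\phi\rangle=\langle\phi,\rho(g^{-1}h)\phi\rangle=\psi(g^{-1}h).
\]
Since $\psi$ is a class function and $g^{-1}h$ is conjugate to $hg^{-1}$, the entries of $\Psi^{\ast}\Psi$ are constant on the relations \eqref{def:groupscheme} of $\mathfrak{X}(G)$. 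This puts $\Psi^{\ast}\Psi$ in the adjacency algebra $\A$ of $\mathfrak{X}(G)$. Now $\Psi^{\ast}\Psi$ is also an orthogonal projection because $\Psi$ is Parseval. Since the primitive idempotents $\{E_{\chi}\}_{\chi\in\Irr(G)}$ given in \eqref{GrpIdem} form a basis of mutually orthogonal projections of $\A$, any projection lying in $\A$ must be a $\{0,1\}$-combination of them. Hence $\Psi^{\ast}\Psi=\sum_{\chi\in D}E_{\chi}=\G_{D}$ for some $D\subset\Irr(G)$.

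For the ``only if'' direction, suppose $\Psi$ is a Parseval frame with Gram matrix $\G_{D}$. By Proposition~\ref{thm:GrpFrm}, the frame $\Phi_{D}$ has the same Gram matrix $\G_{D}$ and is a group frame: it is the $\rho_{D}$-orbit of $\phi_{1}$. Since two Parseval frames with the same Gram matrix are unitarily equivalent, there is a unitary $U$ with $\Psi=U\Phi_{D}$; transporting the representation by $\rho(g):=U\rho_{D}(g)U^{\ast}$ exhibits $\Psi$ as the orbit of $U\phi_{1}$ under $\rho$. The associated function of positive type satisfies $\langle U\phi_{1},\rho(g)U\phi_{1}\rangle=\langle\phi_{1},\rho_{D}(g)\phi_{1}\rangle$, and the computation in the proof of Proposition~\ref{thm:GrpFrm} shows this equals $|G|^{-1}\sum_{\chi\in D}d_{\chi}\overline{\chi(g)}$, which is a class function. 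Therefore $\Psi$ is a central group frame over $G$.

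The only subtle point is justifying, in the ``if'' direction, that a projection lying in $\A$ is automatically a sum of primitive idempotents $E_{\chi}$; this is immediate from the spectral decomposition of $\A$ reviewed in Section~2, but it is the conceptual step on which the whole equivalence rests. Everything else reduces to the unitarity calculation and the explicit formulas \eqref{GrpIdem}--\eqref{eq:GrpGrm} already recorded in the paper.
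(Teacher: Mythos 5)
Your proof is correct and follows essentially the same route as the paper: one direction reduces to $\Phi_D$ via unitary equivalence of Parseval frames with equal Gram matrices and the explicit class function $\frac{1}{|G|}\sum_{\chi\in D}d_\chi\chi(g^{-1})$, and the other observes that a class-function positive-type function forces the Gram matrix into the adjacency algebra of $\mathfrak{X}(G)$, where a projection must be a sum of primitive idempotents. The extra detail you supply (the conjugacy of $g^{-1}h$ and $hg^{-1}$, and the explicit transport of the representation by $U$) is consistent with, and slightly more explicit than, the paper's argument.
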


\begin{proof} Note that for a group frame $\Phi$ over $G$, the entries of the $G\times G$ Gram matrix are determined by the function of positive type $\psi$. Indeed, for $g,h\in G$, the $(g,h)$ entry of the Gram matrix is exactly $\psi(h^{-1}g)$.

If $\G_{D}$ is the Gram matrix of $\Phi$, then $\Phi$ is unitarily equivalent to the group frame $\Phi_{D}$. The function of positive type associated with $\Phi_{D}$ is $\frac{1}{|G|}\sum_{\chi\in D}d_{\chi} \chi(g^{-1})$. This shows that $\Phi_{D}$, and hence $\Phi$, is a central group frame.

Next, assume $\Phi = \{\rho(g)\phi\}_{g\in G}$ is a central group frame with Gram matrix $\G$ and let $\psi$ be the function of positive type associated with $\Phi$. That $\Phi$ is central means that $\psi$ is constant on conjugacy classes. It follows that $\G$ is constant on every relation $R_{i}\subset G\times G$ in $\mathfrak{X}(G)$, that is, $\G$ is in the adjacency algebra $\mathscr{A}$. Since $\Phi$ is a Parseval frame $\G$ is a projection, and hence $\G$ is a sum of primitive idempotents in $\mathscr{A}$. That is, $\G=\G_{D}$ for some $D\subset \Irr(G)$.
\end{proof}

\begin{cor}\label{cor:CharSums} Let $G$ be a finite group of order $n$, and let $D\subset\Irr(G)$. The set $D$ is a hyperdifference set in $\mathfrak{X}(G)$ if and only if
\begin{equation}\label{flat character sums}\left|\sum_{\chi\in D} d_{\chi} \chi(g)\right| = \sqrt{\frac{m_{D}(n-m_{D})}{n-1}}\quad\text{for all }g\in G\setminus\{1\}.\end{equation}
\end{cor}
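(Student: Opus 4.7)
The plan is to unwind the definition of hyperdifference set using the explicit formula \eqref{eq:GrpGrm} for the Gram matrix in the group scheme, and then identify the constant modulus via Theorem~\ref{thm:AsDiff}.

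First I would recall that, by definition, $D \subset \Irr(G)$ is a hyperdifference set in $\mathfrak{X}(G)$ precisely when the off-diagonal entries of $\mathcal{G}_D$ all share a common modulus. Combining the formula
\[ (\mathcal{G}_D)_{g,h} = \frac{1}{|G|} \sum_{\chi \in D} d_\chi \chi(g^{-1}h) \]
with the fact that $(g,h) \mapsto g^{-1}h$ sends the set of off-diagonal pairs onto $G \setminus \{1\}$ (and for each fixed $x \in G \setminus \{1\}$, every pair $(g, gx)$ yields the same value), we see that the off-diagonal entries have equal modulus if and only if the function $x \mapsto \bigl|\sum_{\chi \in D} d_\chi \chi(x)\bigr|$ is constant on $G \setminus \{1\}$. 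This already establishes the equivalence up to identifying the value of the constant.

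For the constant, I would invoke Theorem~\ref{thm:AsDiff}: the theorem gives that, when $D$ is a hyperdifference set, $\mathcal{G}_D \circ \overline{\mathcal{G}_D} = C_1 E_0 + C_2 \sum_{k=0}^d E_k$ with $C_1 = m_D(n - m_D)/(n(n-1))$. Since $E_0 = (1/n)J$ contributes $C_1/n$ to every entry and $\sum_k E_k = I$ contributes $C_2$ only on the diagonal, the off-diagonal entries of $\mathcal{G}_D \circ \overline{\mathcal{G}_D}$ all equal $C_1/n$; that is, $|(\mathcal{G}_D)_{g,h}|^2 = m_D(n-m_D)/(n^2(n-1))$ whenever $g \neq h$. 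Multiplying through by $n = |G|$ to clear the $1/|G|$ in \eqref{eq:GrpGrm} yields
\[ \left| \sum_{\chi \in D} d_\chi \chi(x) \right| = \sqrt{\frac{m_D(n-m_D)}{n-1}} \qquad (x \in G \setminus \{1\}), \]
exactly the claimed identity.

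There is no real obstacle here; this is a bookkeeping corollary. The only subtlety worth flagging is making sure the two directions of the equivalence are addressed symmetrically: constancy of the modulus of the character sum gives the hyperdifference property without reference to the specific value, while the explicit value of the constant is then forced by Theorem~\ref{thm:AsDiff} (or equivalently by the Welch bound, since $\mathcal{G}_D$ is a Parseval frame of $n$ vectors spanning an $m_D$-dimensional space).
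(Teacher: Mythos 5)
Your proposal is correct and follows essentially the same route as the paper: read off the off-diagonal entries of $n\mathcal{G}_D$ as the character sums via \eqref{eq:GrpGrm}, observe that the hyperdifference property is exactly constancy of their moduli, and extract the value of the constant from $C_1$ in Theorem~\ref{thm:AsDiff}. The paper's proof is just a terser version of the same bookkeeping.
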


\begin{proof} Note that the off-diagonal entries of $n\mathcal{G}_{D}$ are exactly $\sum_{\chi\in D}d_{\chi}\chi(g)$ for $g\in G\setminus\{1\}$. From this it follows that $D$ is a hyperdifference set if and only if the the left side of \eqref{flat character sums} equals some constant $C_{1}$. The value of this constant follows from Proposition~\ref{thm:AsDiff}.
\end{proof}

\subsection{An integrality condition for hyperdifference sets}
The Krein parameters for $\mathfrak{X}(G)$ have a useful interpretation in terms of multiplicities and tensor products. For any $\eta,\tau\in\Irr(G)$ we have
\[E_{\eta}\circ E_{\tau} = \frac{1}{|G|}\sum_{\chi\in\Irr(G)} q_{\eta,\tau}^{\chi}E_{\chi}.\]
Multiplying on both sides by $E_{\chi}$ for some fixed $\chi\in\Irr(G)$ we have
\begin{equation}\label{eq:MltKr0}(E_{\eta}\circ E_{\tau})E_{\chi} = \frac{q_{\eta,\tau}^{\chi}}{|G|}E_{\chi}.\end{equation}
From \eqref{GrpIdem} we know the entries of the matrix $E_{\chi}$. In particular, looking at the $(1,1)$-entry of both sides of \eqref{eq:MltKr0} we obtain the first equality in the following identity:
\begin{equation} \label{eq:MltKr}
q_{\eta,\tau}^\chi = \frac{d_\eta d_\tau}{d_\chi}\cdot \frac{1}{|G|} \sum_{g\in G} \eta(g) \tau(g) \overline{\chi(g)} = \frac{d_\eta d_\tau}{d_\chi}\cdot \frac{1}{|G|}  \mult(\pi_\chi, \pi_\eta \otimes \pi_\tau).
\end{equation}
The hypergroup structure on the dual algebra of $\mathfrak{X}(G)$ is then identical to the usual hypergroup structure on $\Irr(G)$. We have a similar interpretation of hyperdifference sets for $\mathfrak{X}(G)$.

\begin{cor} \label{cor:GrpHD}
Let $D\subset \Irr(G)$, and define $\rho_{D} = \bigoplus_{\chi\in D}\rho_{\chi}$. Then, $D$ is a hyperdifference set for $\mathfrak{X}(G)$ if and only if there is a constant $\lambda\geq 0$ such that
\[ \mult(\pi_{\chi}, \rho_D \otimes \overline{\rho_D}) = d_{\chi}\cdot \lambda \]
for every nontrivial irreducible character $\chi\in\Irr(G)$. In that case,
\begin{equation}\label{eq:GrpHD1}\lambda = \frac{m_D(m_D - 1)}{n-1}.\end{equation}
\end{cor}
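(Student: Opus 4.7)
The plan is to derive the corollary directly from Theorem~\ref{thm:AsDiff} combined with the Krein-parameter identity~\eqref{eq:MltKr}. By Theorem~\ref{thm:AsDiff}, $D$ is a hyperdifference set in $\mathfrak{X}(G)$ if and only if the quantities $b_\chi = \sum_{\eta,\tau \in D} q_{\eta,\widehat{\tau}}^{\chi}$ are equal (and equal to $m_D(m_D-1)/(n-1)$) for every nontrivial $\chi \in \Irr(G)$. So the proof reduces to rewriting $b_\chi$ as a tensor-multiplicity.

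First I would pin down the duality $\widehat{\chi}$ on $\Irr(G)$. From~\eqref{GrpIdem}, $(E_\chi)_{g,h} = (d_\chi/|G|)\chi(g^{-1}h)$, so $\overline{E_\chi}$ is the idempotent associated with the complex conjugate character $\overline{\chi}$; that is, $\widehat{\chi} = \overline{\chi}$ and the corresponding representation is $\overline{\pi_\chi}$.

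Next I would compute the character of $\rho_\chi$. Because $\rho_\chi(g)$ acts on $M_{d_\chi}(\C)$ by left multiplication by $\pi_\chi(g)$, fixing the second matrix index, we have $\rho_\chi \cong \pi_\chi^{\oplus d_\chi}$, so $\rho_\chi$ has character $d_\chi \chi$. Hence $\rho_D = \bigoplus_{\chi \in D} \rho_\chi$ has character $\sum_{\chi \in D} d_\chi \chi$, and a direct expansion gives
\[
\mult(\pi_\chi, \rho_D \otimes \overline{\rho_D}) \;=\; \sum_{\eta,\tau \in D} d_\eta d_\tau \, \mult\bigl(\pi_\chi, \pi_\eta \otimes \overline{\pi_\tau}\bigr).
\]
On the other hand, applying~\eqref{eq:MltKr} with $\tau$ replaced by $\overline{\tau}$ and summing yields
\[
b_\chi \;=\; \sum_{\eta,\tau\in D} q_{\eta,\overline{\tau}}^{\chi} \;=\; \frac{1}{d_\chi} \sum_{\eta,\tau \in D} d_\eta d_\tau \, \mult\bigl(\pi_\chi, \pi_\eta \otimes \overline{\pi_\tau}\bigr),
\]
and comparing the two displays gives the clean identity
\[
b_\chi \;=\; \frac{1}{d_\chi}\, \mult(\pi_\chi, \rho_D \otimes \overline{\rho_D}).
\]

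Finally I would invoke Theorem~\ref{thm:AsDiff}: the condition that $b_\chi$ is independent of $\chi$ for nontrivial $\chi \in \Irr(G)$ is equivalent to $\mult(\pi_\chi, \rho_D \otimes \overline{\rho_D}) = d_\chi \lambda$ for some constant $\lambda \geq 0$, and the specific value $b_\chi = m_D(m_D-1)/(n-1)$ from~\eqref{eq:AsDiff2} yields formula~\eqref{eq:GrpHD1}. There is no real obstacle; the only point requiring care is correctly identifying $\widehat{\chi}$ with $\overline{\chi}$ and the character of the left-multiplication representation $\rho_\chi$ as $d_\chi \chi$, after which the equivalence is bookkeeping.
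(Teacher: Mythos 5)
Your proposal is correct and follows essentially the same route as the paper: identify the character of $\rho_\chi$ as $d_\chi\chi$ (so $\rho_\chi\cong\pi_\chi^{(d_\chi)}$), use the Krein-parameter identity \eqref{eq:MltKr} to show $\mult(\pi_\chi,\rho_D\otimes\overline{\rho_D})=d_\chi b_\chi$, and then invoke Theorem~\ref{thm:AsDiff}. The only cosmetic difference is that you expand the multiplicity into a double sum of individual multiplicities before applying \eqref{eq:MltKr}, whereas the paper computes the character inner product directly; the substance is identical.
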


\begin{proof} For a character $\chi\in\Irr(G)$ and $g\in G$, the trace character of $\rho_{\chi}$ can be calculated using the orthonormal basis of matrix units $e_{s,t}\in M_{d_{\chi}}(\C)$:
\[
\tr(\rho_{\chi}(g)) = \sum_{s,t = 1}^{d_{\chi}}\langle \rho_{\chi}(g)(e_{s,t}),e_{s,t}\rangle_{HS} = \sum_{s,t=1}^{d_{\chi}} \tr(\pi_{\chi}(g) e_{s,t}^{} e_{s,t}^{\ast}) = d_{\chi}\sum_{s=1}^{d_{\chi}} [\pi_{\chi}(g)]_{s,s} = d_{\chi}\chi(g).
\]
This shows that $\rho_{\chi}\cong \pi_{\chi}^{(d_{\chi})}$, and hence $\rho_{D} \cong \bigoplus_{\chi\in D}\pi_{\chi}^{(d_{\chi})}$.

For any $\chi\in\Irr(G)$ we use \eqref{eq:MltKr} to compute
\begin{align*}
\mult(\pi_{\chi},\rho_{D}\otimes\overline{\rho_{D}}) & = \frac{1}{|G|}\sum_{g\in G}\left(\sum_{\tau\in D}d_{\tau}\tau(g)\right)\left(\sum_{\eta\in D}d_{\overline{\eta}}\overline{\eta(g)}\right)\overline{\chi(g)}\\
 & = \sum_{\tau,\eta\in D}d_{\tau}d_{\overline{\eta}}\frac{1}{|G|}\sum_{g\in G}\tau(g)\,\overline{\eta(g)}\,\overline{\chi(g)} = d_{\chi}\sum_{\tau,\eta\in D}q_{\tau,\overline{\eta}}^{\chi}.
\end{align*}
In the notation of Proposition~\ref{thm:AsDiff} the last expression is equal to $d_{\chi}b_{\chi}$. From the same theorem, we see that $D$ is a hyperdifference set if and only if there is some constant $\lambda$ such that $b_{\chi}=\lambda$ for all nontrivial characters $\chi\in\Irr(G)$. Moreover, from \eqref{eq:AsDiff2} we see that \eqref{eq:GrpHD1} holds.\end{proof}

Corollary~\ref{cor:GrpHD} is the nonabelian version of Example~\ref{ex:AbDif}, which gave the correspondence between ETFs and difference sets for abelian groups. Just as in the abelian setting, this interpretation of hyperdifference sets provides a powerful integrality condition for the existence of ETFs produced by group schemes.

\begin{cor} \label{cor:GrpInt}
Let $n\geq 2$, and let $\{\phi_{i}\}_{i=1}^{n}$ be a frame for $\C^{m}$. If $\{\phi_{i}\}_{i=1}^{n}$ is an ETF and a central group frame, then
\[ \frac{ m(m-1) }{n-1} \in \Z. \]
\end{cor}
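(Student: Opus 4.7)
The plan is to reduce to the canonical central group frame via Corollary~\ref{cor:CntFrm}, invoke the tensor-product reformulation of hyperdifference sets in Corollary~\ref{cor:GrpHD}, and then extract integrality by combining the resulting multiplicity identities with the classical fact that irreducible character degrees divide the group order.

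First, I would rescale $\{\phi_i\}_{i=1}^n$ to be Parseval, an operation that preserves both the ETF and central group frame hypotheses. Corollary~\ref{cor:CntFrm} then produces a subset $D \subset \Irr(G)$ with $|G| = n$ such that the Gram matrix of the rescaled frame equals $\G_D$. The ETF condition---constant modulus off-diagonal entries---is precisely the statement that $D$ is a hyperdifference set.

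Next, Corollary~\ref{cor:GrpHD} gives $\lambda := \tfrac{m(m-1)}{n-1}$ together with
\[
\mult(\pi_\chi, \rho_D \otimes \overline{\rho_D}) = d_\chi \lambda
\qquad (\chi \in \Irr(G),\ \chi \neq 1).
\]
Since multiplicities are nonnegative integers, $d_\chi \lambda \in \Z$ for every such $\chi$. Writing $d := \gcd\{d_\chi : \chi \in \Irr(G),\ \chi \neq 1\}$ and applying B\'ezout's identity to the relations $d_\chi \lambda \in \Z$, I obtain $d\lambda \in \Z$, equivalently $(n-1) \mid d \cdot m(m-1)$.

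The main obstacle---or rather the key observation that closes the argument---is passing from $d\lambda \in \Z$ to $\lambda \in \Z$. For this I would invoke the classical theorem that $d_\chi \mid |G|$ for every irreducible character $\chi$, which forces $d \mid n$. Since $\gcd(n, n-1) = 1$, this yields $\gcd(d, n-1) = 1$, and combined with $(n-1) \mid d \cdot m(m-1)$ it produces the desired divisibility $(n-1) \mid m(m-1)$. The conceptual crux is recognizing that the representation-theoretic identity $a_\chi = d_\chi \lambda$ must be paired with the arithmetic input $d_\chi \mid n$ to tighten a weak divisibility statement into the strong one claimed.
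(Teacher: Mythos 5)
Your proposal is correct and follows essentially the same route as the paper: reduce via Corollary~\ref{cor:CntFrm} to a hyperdifference set $D$, apply Corollary~\ref{cor:GrpHD} to get $d_\chi\lambda\in\Z$, and use $d_\chi\mid|G|=n$ to conclude $\gcd(d_\chi,n-1)=1$. The only difference is your detour through $d=\gcd\{d_\chi\}$ and B\'ezout, which is harmless but unnecessary---a single nontrivial character already gives the conclusion, as in the paper.
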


\begin{proof}
By Corollary~\ref{cor:CntFrm}, we may assume that $n=|G|$ and that the Gram matrix of our ETF is $\G_D$ for some $D \subset \Irr(G)$, which must be a hyperdifference set. By Corollary~\ref{cor:GrpHD}, for any nontrivial character $\chi\in\Irr(G)$
\[ d_\chi \cdot \frac{ m(m-1) }{n-1} \in \Z.\]
In other words, $n-1$ divides $d_\chi \cdot m(m-1)$. A basic result of character theory says that $d_\chi$ divides $n = |G|$. Therefore, $d_\chi$ and $n-1$ are coprime. Thus, $n-1$ divides $m(m-1)$.
\end{proof}

\subsection{Hyperdifference sets of constant degree}\label{sect:HDSCD}

We now apply the integrality condition of Corollary~\ref{cor:GrpInt} for a special class of indexing sets. Thill and Hassibi \cite{TH} have suggested a construction for low-coherence frames that always produces a central group frame. In the abelian case, their construction sometimes yields actual ETFs. In the following section, we will produce an infinite family of ETFs generated by nonabelian groups, each of which is an instance of the construction in \cite{TH}. The frames of \cite{TH} have a useful feature: their indexing sets $D\subset \Irr(G)$ are all characters of equal degree. For this class of frames, the integrality constraint of Corollary~\ref{cor:GrpInt} is especially discriminating.

We say that a hyperdifference set $D\subset\Irr(G)$ is of constant degree with parameters $(n,k,l,m)$ if $|G|=n$, $|D|=k$, $d_{\chi} = l$ for each $\chi\in D$, and $m=m_{D}$. The goal of this subsection is to produce a short list of possible parameters of hyperdifference sets of constant degree in nonabelian groups of order $n<1024$.  To this end we will combine the integrality condition of Corollary~\ref{cor:GrpInt}, a few group-theoretic lemmas outlined below, and an exhaustive computer search using GAP \cite{GAP} and Sage \cite{sage}. For the sake of reproducibility, our code is available in \cite{GitHub}.

In this section we will find use for the following notation: $\Irr_{l}(G)$ is the set of irreducible characters of degree equal to $l$, and $\Irr_{>1}(G)$ is the set of nonlinear irreducible characters. We will suppress the group if there will be no confusion.

\begin{lemma} \label{lem:kg1}
Let $G$ be a nonabelian group. If $D\subset\Irr(G)$ is a hyperdifference set of constant degree with parameters $(n,k,l,m)$ and $m\neq 1$, then $l \geq 2$.
\end{lemma}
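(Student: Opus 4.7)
The plan is to derive a contradiction from assuming $l=1$, using the key fact that linear characters vanish (in the sense of being trivial) on the commutator subgroup.

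First I would suppose, toward a contradiction, that $l=1$, so that every $\chi \in D$ is a linear character. Then $m = m_D = \sum_{\chi\in D} d_\chi^2 = k$. Since linear characters factor through the abelianization $G/[G,G]$, every such $\chi$ takes value $1$ on any element of the commutator subgroup $[G,G]$. Because $G$ is nonabelian, $[G,G]$ is nontrivial, so we may pick some $g \in [G,G]$ with $g \neq 1$.

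For this $g$, the character sum becomes
\[
\left|\sum_{\chi \in D} d_\chi \chi(g)\right| \;=\; \left|\sum_{\chi \in D} 1\right| \;=\; k \;=\; m.
\]
On the other hand, Corollary~\ref{cor:CharSums} requires
\[
\left|\sum_{\chi \in D} d_\chi \chi(g)\right| \;=\; \sqrt{\frac{m(n-m)}{n-1}}.
\]
Equating and squaring gives $m^2(n-1) = m(n-m)$, and since $m \geq 1$ this simplifies to $m(n-1) = n-m$, i.e., $mn = n$, forcing $m=1$. This contradicts the hypothesis $m \neq 1$, so $l \geq 2$.

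There is no real obstacle here; the only subtlety is remembering that $G$ nonabelian guarantees $[G,G]\neq\{1\}$ (so a witness $g\neq 1$ with $\chi(g)=1$ for all linear $\chi$ actually exists), which is exactly where the nonabelian hypothesis enters.
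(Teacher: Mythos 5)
Your proof is correct and follows essentially the same route as the paper: take $g\in[G,G]\setminus\{1\}$ (nontrivial since $G$ is nonabelian), note the character sum equals $m$ there, and contradict Corollary~\ref{cor:CharSums}. The only difference is that you spell out the algebra showing $m=\sqrt{m(n-m)/(n-1)}$ forces $m=1$, which the paper leaves implicit.
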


\begin{proof}
Every character of degree 1 is constantly equal to $1$ on the commutator subgroup $[G,G]$, which must be nontrivial since $G$ is nonabelian. If $l=1$, then any $g \in [G,G] \setminus\{1\}$ satisfies
\[ \left| \sum_{\chi\in D} d_\chi \chi(g) \right| = m_D \neq \sqrt{ \frac{ m_D(n-m_D) }{n-1} }, \]
contrary to Corollary~\ref{cor:CharSums}.
\end{proof}

\begin{lemma} \label{lem:ConjSz}
Suppose there is a hyperdifference set $D\neq \emptyset$ for $\mathfrak{X}(G)$ such that $d_\chi \geq 2$ for all $\chi\in D$. If $d_M  = \max_{\chi\in D} d_\chi$, then the size of each of the $d$ conjugacy classes $C_i\subset G$ other than $C_0 = \{1\}$ satisfies the inequality
\begin{equation} \label{eq:ConjSz}
\frac{|G|}{|C_i|} \geq \frac{|G|}{|[G,G]|} + \frac{ m_D}{d_M^2|D|}\cdot \frac{|G|-m_D}{|G|-1} \qquad (1 \leq i \leq d),
\end{equation}
Consequently:
\begin{enumerate}[(i)]
\item Every coset of $[G,G]$ contains at least two conjugacy classes.
\item At least half of the irreducible characters of $G$ are nonlinear.
\end{enumerate}
\end{lemma}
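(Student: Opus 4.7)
The plan is to combine the character sum identity from Corollary~\ref{cor:CharSums} with Cauchy--Schwarz and the column orthogonality relations, using the hypothesis $d_\chi \geq 2$ to separate linear from nonlinear characters.

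First, fix a nontrivial conjugacy class $C_i$ and pick $g \in C_i$. By Corollary~\ref{cor:CharSums},
\[\left|\sum_{\chi \in D} d_\chi \chi(g)\right|^2 = \frac{m_D(|G|-m_D)}{|G|-1}.\]
Apply Cauchy--Schwarz with the vectors $(1)_{\chi \in D}$ and $(d_\chi \chi(g))_{\chi \in D}$, then bound $d_\chi \leq d_M$:
\[\left|\sum_{\chi \in D} d_\chi \chi(g)\right|^2 \leq |D| \sum_{\chi \in D} d_\chi^2 |\chi(g)|^2 \leq d_M^2 |D| \sum_{\chi \in D} |\chi(g)|^2.\]
Next, since every $\chi \in D$ satisfies $d_\chi \geq 2$, we have $D \subset \Irr_{>1}(G)$. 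Linear characters are unimodular, and their count equals $|G|/|[G,G]|$, so column orthogonality gives
\[\sum_{\chi \in D} |\chi(g)|^2 \leq \sum_{\chi \in \Irr_{>1}(G)} |\chi(g)|^2 = \frac{|G|}{|C_i|} - \frac{|G|}{|[G,G]|}.\]
Chaining the three displays and rearranging yields \eqref{eq:ConjSz}.

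For consequence (i), I would use the basic fact that $hgh^{-1} = [h,g] \cdot g \in g[G,G]$, so every conjugacy class is contained in a single coset of $[G,G]$, and a coset containing a lone class $C_i$ would force $|C_i| = |[G,G]|$. The inequality \eqref{eq:ConjSz} combined with the strict positivity of the second term (which uses $D \neq \emptyset$, hence $m_D \geq 1$, and $m_D < |G|$ since the trivial character is excluded) gives $|G|/|C_i| > |G|/|[G,G]|$, so every nontrivial coset contains at least two classes. For the coset $[G,G]$ itself, nonemptiness of $D$ combined with $d_\chi \geq 2$ for $\chi \in D$ forces $G$ to be nonabelian, hence $|[G,G]| \geq 2$, so $\{1\}$ is joined by at least one more class inside $[G,G]$.

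Consequence (ii) is then a counting argument: the number of irreducible characters equals the number of conjugacy classes, which by (i) is at least $2 \cdot |G|/|[G,G]| = 2|\Irr_1(G)|$, whence $|\Irr_{>1}(G)| \geq |\Irr_1(G)|$. The main (minor) obstacle is just making sure the second term in \eqref{eq:ConjSz} is strictly positive to prevent equality in the coset-counting step; this reduces to checking $0 < m_D < |G|$, which is immediate since $D$ is a nonempty subset of nonlinear characters and the trivial character has $m_{\{1\}} = 1 < m_D$ but is not among them.
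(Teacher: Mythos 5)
Your proof is correct and follows essentially the same route as the paper's: Corollary~\ref{cor:CharSums} plus Cauchy--Schwarz to lower-bound $\sum_{\chi\in D}|\chi(g)|^2$, column orthogonality to split off the $|G|/|[G,G]|$ contribution of the linear characters, and the strict positivity of the resulting excess term to force each coset of $[G,G]$ to contain more than one class. The only cosmetic difference is which pair of vectors you feed to Cauchy--Schwarz; both choices land on the same bound $d_M^2|D|\sum_{\chi\in D}|\chi(g)|^2$.
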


\begin{proof}
Let $C_i\subset G$ be a conjugacy class other than $C_0 = \{1\}$, and let $g \in C_i$ be arbitrary. By the column orthogonality relations \cite[Theorem 16.4]{JL},
\[ \frac{|G|}{|C_i|} = \sum_{\chi\in\Irr} | \chi(g)|^2 = \sum_{\chi \in \Irr_1} | \chi_(g)|^2 + \sum_{\chi \in \Irr_{>1}} | \chi(g)|^2. \]
Since $|\chi(g)| = 1$ for every $\chi \in \Irr_1$, and since $D\subset \Irr_{>1}$,
\begin{equation} \label{eq:ConjSz2}
\frac{|G|}{|C_i|} \geq |\Irr_1(G)| + \sum_{\chi\in D} | \chi(g) |^2 = \frac{|G|}{|[G,G]|} + \sum_{\chi\in D} | \chi(g) |^2.
\end{equation}
On the other hand, Corollary~\ref{cor:CharSums} and the Cauchy--Schwarz inequality show that
\begin{equation} \label{eq:ConjSz3}
\frac{m_D(|G|-m_D)}{|G|-1} = \left| \sum_{\chi\in D} d_\chi \chi(g) \right|^2 \leq  \left( \sum_{\chi \in D} d_\chi^2 \right) \left( \sum_{\chi \in D} | \chi(g) |^2 \right) \leq d_M^2 |D| \sum_{\chi\in D} | \chi(g) |^2.
\end{equation}
Combining \eqref{eq:ConjSz2} and \eqref{eq:ConjSz3} gives \eqref{eq:ConjSz}.

Since $G/[G,G]$ is abelian, every conjugacy class $C_i$ lies in a single coset of $[G,G]$, so that
\[ |C_i| \leq |[G,G]|. \]
Equality obviously fails when $i=0$, since $G$ has a nonlinear character and is therefore nonabelian. It cannot hold for any $i\geq 1$, either, or else \eqref{eq:ConjSz} will fail. Thus, the coset of $[G,G]$ containing $C_i$ contains another conjugacy class, too. This is the case for every conjugacy class, hence for every coset of $[G,G]$. This proves (i).

For (ii), recall that the number of irreducible characters equals the number of conjugacy classes. By (i),
\[ d+1 \geq 2(G : [G,G]) = 2|\Irr_1|. \]
In other words, no more than half of the irreducible characters are linear.
\end{proof}

 The remainder of this section will be dedicated to showing that the parameters of any hyperdifference set of constant degree in a nonabelian group of order $n<1024$ with $m\notin\{0,1,n\}$ appear in Table \ref{table:int}.

\begin{table}
\begin{tabular}{rrrr|r}
$n$ & $k$& $l$ & $m$ & \# \\ \hline\hline
64 & 7 & \hphantom{00}2 & 28 & 10 \\
256 & 30 & 2 & 120 & 1936 \\
256 & 34 & 2 & 136 & 1936 \\
320 & 22 & 2 & 88 & 17 \\
320 & 58 & 2 & 232 & 10 \\
576 & 69 & 2 & 276 & 56 \\
576 & 75 & 2 & 300 & 56\\
640 & 18 & 2 & 72 & 799 \\
896 & 45 & 2 & 180 & 709 \\
896 & 179 & 2 & 716 & 41
\end{tabular}
\vspace{12pt}
\caption{The parameters for which there may exist hyperdifference sets of constant degree in nonabelian groups of orders less than 1024.
The column labeled ``\#'' gives the number of groups of order $n$ that might admit a hyperdifference set with parameters $(n,k,l,m)$; see \cite{GitHub} for a complete list.}
\label{table:int}
\end{table}

Suppose we have a hyperdifference set $D$ of constant degree with parameters $(n,k,l,m)$ in a nonabelian group $G$ of order less than $1024$. The constant degree $l = d_\chi$ must divide $n = |G|$ and satisfy
\[ kl^2 = m < n, \]
since $m$ is the dimension of a frame with $n$ vectors. Moreover, $l\geq 2$ by Lemma~\ref{lem:kg1}. We performed a brute force search for all parameters $(n,k,l,m)$ with these properties that also satisfy the integrality condition of Corollary~\ref{cor:GrpInt}. This produced 238 tuples $(n,k,l,m)$.

The computer program GAP \cite{GAP} contains a library of all groups with order less than 1024. For each of our 238 tuples, we used GAP to determine whether or not there was a nonabelian group of order $n$ with conjugacy classes of sizes consistent with Lemma~\ref{lem:ConjSz}. That reduced the list to 38 tuples.

For the groups that remained, we again used GAP to compute their full character tables, and checked two more conditions. First, the group had to actually possess $k$ characters of degree $l$. Second, Corollary~\ref{cor:CharSums} required that
\begin{equation} \label{eq:ConstDeg2}
\frac{m(n-m)}{n-1} = l^2 \left| \sum_{\chi\in D} \chi(g) \right|^2 \leq kl^2  \sum_{\chi\in D} | \chi(g) |^2 \leq m \sum_{\chi \in \Irr_l} | \chi(g)|^2
\end{equation}
for all $g \neq 1$. In other words, the character table of $G$ had to satisfy
\[ \sum_{\chi\in \Irr_l} |\chi(g)|^2 \geq \frac{n-m}{n-1} \qquad (g \neq 1). \]
We used the package FUtil \cite{futil} to make the necessary comparisons in GAP. Only 11 of the remaining 38 tuples had groups that passed this test. One of these, $(64,9,2,36)$, was small enough to allow a brute force check on all ten of its candidate groups. We used GAP to verify that no such hyperdifference set exists. The ten tuples that remained appear in Table~\ref{table:int}.

To pare down the list of candidate groups even further, we turned to Sage \cite{sage}. The current implementation of cyclotomic numbers in GAP does not allow for square roots of irrational real numbers. Consequently, it can compute things like $|z|^2 = z\overline{z}$, but not $|z| = \sqrt{z\overline{z}}$. That is why we used the Cauchy--Schwarz inequality in \eqref{eq:ConstDeg2}, even though the triangle inequality gives a potentially tighter bound:
\begin{equation} \label{eq:ConstDeg3}
\sqrt{\frac{m(n-m)}{n-1}} = l \left| \sum_{\chi\in D} \chi(g) \right| \leq l \sum_{\chi\in D} | \chi(g) | \leq l \sum_{\chi\in \Irr_l} | \chi(g)| \qquad (g\neq 1).
\end{equation}
Unlike GAP, Sage has an implementation of algebraic numbers that allows the user to take absolute values, while still affording exact computations. Using Sage's internal interface to GAP, we checked the groups that passed all of our previous tests for compliance with the inequality
\begin{equation} \label{eq:ConstDeg4}
\sum_{\chi\in \Irr_l} | \chi(g) | \geq \sqrt{\frac{k(n-m)}{n-1}} \qquad (g\neq 1),
\end{equation}
which is equivalent to \eqref{eq:ConstDeg3}. Where equality held, we checked additional necessary conditions. Namely, if equality holds across \eqref{eq:ConstDeg3} for some $g$, then $\chi(g) = 0$ for all $\chi\in \Irr_l\setminus D$. Moreover, the nonzero elements of $\{\chi(g)\}_{\chi\in D}$ must all have the same phase, since equality held in the triangle inequality. The same is then true of $\{\chi(g)\}_{\chi\in \Irr_l}$. Finally, the set
\[ \widetilde{D} = \{ \chi \in \Irr_l \colon \chi (g) \neq 0 \text{ for some $g\neq 1$ that produces equality in \eqref{eq:ConstDeg4}}\} \subset D \]
must have cardinality $|\widetilde{D}| \leq k$, and if equality holds then $\widetilde{D}$ must actually be a hyperdifference set. The number of groups that passed all of these tests appears in the table. This completes the argument that the table above is complete.

When a hyperdifference set with parameters in our table exists, it produces an $m \times n$ ETF. It turns that all ten of the groups implicated by the first row of the table have hyperdifference sets with parameters $(64,7,2,28)$.  In the next section, we will prove that one of these is the smallest example in a new infinite family of hyperdifference sets for group schemes with parameters $n=2^{4j+2}$, $k=2^{2j+1}-1$, and $l=2^j$ for $j\geq 1$. We are especially interested in the last two rows of the table, because if there are hyperdifference sets with these parameters, they produce previously unobserved ETFs \cite{FMTable}.

\begin{rem}
In the next section, we build an infinite family of nonabelian groups with hyperdifference sets of a form suggested by \cite{TH}: For some subgroup $A \subset \Aut(G)$, $D$ is an orbit of the action of $A$ on $\Irr(G)$. Any hyperdifference set of this form consists of characters with equal degrees, so if it exists in a group of order $n < 1024$, its parameters appear in the table. Meanwhile, $k$ must divide $|A|$, which must divide $|\Aut(G)|$. We made an exhaustive search of all the parameters and groups that passed the tests described in this subsection, and the only one for which $k$ divides $|\Aut(G)|$ is SmallGroup(64,82), with $k = 7$. That group really does have a hyperdifference set of 7 characters with the form suggested in \cite{TH}. However, no other nonabelian group of order $n < 1024$ possesses a hyperdifference set with this form.
\end{rem}

%INFINITE FAMILY============================================================================================================
\section{Hyperdifference sets for Suzuki 2-groups}

For the remainder of the paper, we focus on an example of the theory built so far. In this section, we construct a new infinite family of hyperdifference sets for group schemes. This is the first time an infinite family of nonabelian groups has been shown to generate ETFs as projective orbits under unitary representations. As we will see in Section~\ref{sec:Heis}, the frames we build here are intimately connected with finite Heisenberg groups. 

\begin{defn}
Let $U$ and $V$ be vector spaces over the same field $\mathbb{K}$, and let $B \colon U \times U \to V$ be a $\mathbb{K}$-bilinear map. We write $U \times_B V$ for the group with underlying set $U \times V$ and multiplication
\[ (u,v) \cdot (x,y) = (u+x, v+y+B(u,x)) \qquad (u,x\in U;\, v,y \in V); \]
we call $U \times_B V$ the \emph{$B$-product} of $U$ and $V$.
\end{defn}

The reader can verify that $U\times_B V$ forms a group with identity $(0,0)$ and inverses given by
\[ (u,v)^{-1} = (-u,-v+B(u,u)) \qquad (u\in U,\, v \in V). \]
In fact, $U\times_B V$ is the central extension of $(U,+)$ by $(V,+)$ associated with the 2-cocycle $B$.

\begin{example}
When $U=\R^n$, $V=\R$, and $B$ is the usual dot product, $U \times_B V$ is the Heisenberg group $\mathbb{H}_n$.
\end{example}

In general, $U\times_B V$ is not abelian unless $B$ is symmetric. In fact, much of the structure of $U \times_B V$ is controlled by the antisymmetric bilinear map $\widehat{B} \colon U \times U \to V$,
\[ \widehat{B}(u,v) = B(u,v) - B(v,u) \qquad (u,v \in U), \]
which acts like a sort of commutator. For each $u \in U$, we let $L_u \colon U \to V$ be the linear map
\[ L_u(v) = \widehat{B}(u,v) = B(u,v) - B(v,u) \qquad (v \in U), \]
and we let $\mathcal{L} \colon U \to \Hom_{\mathbb{K}}(U,V)$ be the linear function with $\mathcal{L}(u) = L_u$.

\begin{prop} \label{prop:Bprods}
Keep notation as above.
\begin{enumerate}[(i)]
\item $Z(U\times_B V) = (\ker \mathcal{L}) \times V = \{ u \in U \colon \widehat{B}(u,v) = 0 \text{ for all }v \in U\} \times V$. \smallskip
\item $[U\times_B V, U \times_B V] = \{0\} \times \spn_{\mathbb{K}}\{ \widehat{B}(u,v) \colon u,v \in U\}$. \smallskip
\item Conjugacy classes in $U\times_B V$ take the form
\[ (u,v)^{U\times_B V} = \{ (u,v+\widehat{B}(u,w)) \colon w \in U\} \qquad (u\in U,\, v \in V). \]
\item If $W\subset V$ is a vector subspace, and if $q\colon V \to V/W$ is the natural quotient, then there is a group homomorphism $\varphi \colon U \times_B V \to U \times_{q\circ B} (V/W)$ given by $\varphi(u,v) = (u,v+W)$ for $u\in U$ and $v\in V$. It factors to give an isomorphism
\[ (U\times_B V)/(\{0\}\times W) \cong U \times_{q \circ B} (V/W). \]
\end{enumerate}
\end{prop}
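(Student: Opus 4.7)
The whole proposition is a bookkeeping exercise with the multiplication rule $(u,v)(x,y) = (u+x, v+y+B(u,x))$ and its implied inverse formula $(u,v)^{-1} = (-u,-v+B(u,u))$, so the plan is to carry out the relevant computation for each part. For (i), I would equate $(u,v)(x,y)$ with $(x,y)(u,v)$: the first coordinate matches automatically, and the second-coordinate equation reduces to $B(u,x) = B(x,u)$, i.e. $\hat B(u,x) = 0$. Demanding this for every $(x,y)$ forces $L_u = 0$, so $(u,v)$ is central precisely when $u \in \ker \mathcal L$, with $v$ unconstrained.

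For (iii), I would conjugate directly: $(x,y)(u,v)(x,y)^{-1}$. Multiplying out, the $B(x,x)$ terms from the first product and the inverse formula cancel, leaving $(u, v + B(x,u) - B(u,x)) = (u, v - \hat B(u,x))$. Letting $w = -x$ range over $U$ gives the advertised description of the conjugacy class. Part (ii) then comes from the same style of calculation: expanding $[(u,v),(x,y)] = (u,v)(x,y)(u,v)^{-1}(x,y)^{-1}$ collapses to $(0, \hat B(u,x))$. Since every commutator lies in the abelian subgroup $\{0\} \times V$, the derived subgroup equals $\{0\} \times N$, where $N \subseteq V$ is the additive subgroup generated by the values of $\hat B$. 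The one genuinely nontrivial point here — and the main obstacle, such as it is — is to recognize that this additive subgroup coincides with the $\mathbb{K}$-linear span. But $\mathbb{K}$-bilinearity gives $\lambda\,\hat B(u,x) = \hat B(\lambda u, x)$, so the generating set is already closed under scalar multiplication, and its additive closure is automatically a $\mathbb{K}$-subspace.

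For (iv), I would first note that $\{0\} \times W$ is a subgroup (and in fact central, by applying the commutator formula from (ii) and observing $\hat B(0,\cdot) = 0$), so it is normal. The map $\varphi$ is essentially dictated by reducing the second coordinate modulo $W$: a short computation using $q(B(u,x)) = (q\circ B)(u,x)$ verifies $\varphi((u,v)(x,y)) = \varphi(u,v)\varphi(x,y)$. The kernel is visibly $\{0\} \times W$ and $\varphi$ is surjective, so the first isomorphism theorem delivers the quotient isomorphism. In sum, the proposition is the structural fingerprint of a $2$-cocycle central extension, and nothing in the proof is hard beyond the span-versus-additive-subgroup remark in (ii).
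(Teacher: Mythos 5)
Your proof is correct and follows essentially the same route as the paper, whose entire argument is to compute $(x,y)^{-1}(u,v)(x,y) = (u,\,v+\hat{B}(u,x))$ and the commutator $(0,\hat{B}(u,x))$ and then declare (i)--(iv) immediate. Your extra observation in (ii) --- that the additive subgroup generated by the values of $\hat{B}$ is automatically a $\mathbb{K}$-subspace because $\lambda\,\hat{B}(u,x) = \hat{B}(\lambda u,x)$ --- correctly fills in the one detail the paper leaves to the reader.
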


\begin{proof}
Let $u,x\in U$ and $v,y \in V$. The reader can check that
\[ (x,y)^{-1} \cdot (u,v) \cdot (x,y) = (u,v+ \widehat{B}(u,x)) = (u,v+ L_u(x)), \]
hence
\[ (u,v)^{-1}\cdot (x,y)^{-1}\cdot (u,v) \cdot (x,y) = (0,\widehat{B}(u,x)). \]
Statements (i)--(iii) follow immediately. The reader can check that $\varphi$ is a homomorphism in (iv); the rest is immediate.
\end{proof}

\begin{defn}
Use notation as above. We will say that $B$ has the \emph{injective hyperplane property} (IHP) if:
\begin{enumerate}[(i)]
\item For each $u \in U \setminus\{0\}$, $\ran L_u$ is a hyperplane in $V$. That is, $\dim_{\mathbb{K}} V/(\ran L_u) = 1$.
\item The function $u\mapsto \ran L_u$ is injective on $U\setminus\{0\}$.
\end{enumerate}
\end{defn}

For the remainder of this section, we assume that $U=V$ is a vector space over $\mathbb{K} = \F_2$ with odd dimension $n = 2k+1 \geq 3$, and that $B \colon U \times U \to U$ is a bilinear map with IHP. We will prove that $G = U \times_B U$ admits a nontrivial hyperdifference set.

For each $u \in U \setminus\{0\}$, we let
\[ H_u = \ran L_u = \{ \widehat{B}(u,v) \colon v \in U \} \]
be the hyperplane guaranteed by IHP. Then $U /H_u \cong \F_2$ in exactly one way, and with this identification in mind we let $q_u \colon U \to \F_2$ give the natural quotient of $U$ onto $U/H_u$; explicitly,
\[ q_u(v) = \begin{cases}
0, & \text{if }v \in H_u \\
1, & \text{otherwise}
\end{cases} \qquad (v\in U). \]
We define $G_u = U \times_{q_u \circ B} \F_2$, whose multiplication is given by
\[ (x,\epsilon)\cdot (y,\delta) = \begin{cases}
(x+y,\epsilon + \delta), & \text{if }B(x,y) \in H_u \\
(x+y,\epsilon + \delta + 1), & \text{otherwise}
\end{cases} \qquad (x,y\in U;\, \epsilon,\delta \in \F_2). \]
By Proposition~\ref{prop:Bprods}(iv), $G_u$ is essentially a copy of $G / (\{0\} \times H_u)$. More precisely, we have a surjective homomorphism $\varphi_u \colon G \to G_u$ given by
\[ \varphi_u(x,y) = \begin{cases}
(x,0), & \text{if }y \in H_u \\
(x,1),  & \text{otherwise}
\end{cases} \qquad (x,y \in U). \]
Our plan is to pull a nonlinear character of $G_u$ back to $G$ through $\varphi_u$. As $u$ iterates through $U \setminus\{0\}$, these pullbacks will form a hyperdifference set for $G$.

\begin{theorem} \label{thm:IHP}
For each $u \in U \setminus\{0\}$, $G_u$ has exactly two nonlinear irreducible characters, both of dimension $2^k$. If $\chi_u \colon G_u \to \C$ is either choice of these two, and if $\widetilde{\chi}_u = \chi_u \circ \varphi_u$ is its pullback to $G$, then $\{ \widetilde{\chi}_u \}_{u\in U \setminus\{0\}}$ is a hyperdifference set for $G$.
\end{theorem}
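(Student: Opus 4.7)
The plan follows the two assertions in sequence, using Proposition~\ref{prop:Bprods} for the structure of $G_u$ and the flat-sum criterion of Corollary~\ref{cor:CharSums} for the hyperdifference-set condition. Applied to $G_u = U \times_{q_u \circ B} \F_2$, the antisymmetric form reads $\widehat{q_u \circ B}(x,y) = q_u(L_x(y))$. By IHP, for $x \neq 0$ the range of $L_x$ is a hyperplane which is contained in $H_u$ if and only if it equals $H_u$, if and only if $x = u$. So Proposition~\ref{prop:Bprods}(i)--(iii) gives $Z(G_u) = \{0,u\}\times\F_2$ of order $4$, commutator $[G_u,G_u] = \{0\}\times\F_2$ of order $2$, and conjugacy classes that are singletons on $Z(G_u)$ and pairs $\{(x,0),(x,1)\}$ off of it. Thus $G_u$ has $2^{2k+1}$ linear characters and $2^{2k+1}+2$ conjugacy classes, so exactly $2$ nonlinear irreducibles, and their degrees $d_1,d_2$ satisfy $d_1^2 + d_2^2 = 2^{2k+2} - 2^{2k+1} = 2^{2k+1}$, forcing $d_1 = d_2 = 2^k$.

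For $g = (x,\epsilon)\notin Z(G_u)$ the centralizer has order $2^{2k+1}$, so column orthogonality gives $\sum_\tau|\tau(g)|^2 = 2^{2k+1}$; the $2^{2k+1}$ linear characters already saturate this sum, hence every nonlinear character vanishes on $G_u\setminus Z(G_u)$. On the center, a nonlinear irreducible restricts to $2^k\lambda$ with $\lambda(0,1) = -1$ (otherwise it would factor through $G_u/[G_u,G_u]$), and the second orthogonality relation applied at $(u,0)$ and $(0,1)$ forces the two nonlinear characters $\chi_u,\chi_u'$ to correspond to the two choices $\lambda(u,0) = \pm 1$. Pulling back via $\tilde\chi_u(x,y) = \chi_u(x,q_u(y))$, one sees $\tilde\chi_u$ vanishes unless $x\in\{0,u\}$, in which case it equals $\pm 2^k(-1)^{q_u(y)}$.

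It remains to verify Corollary~\ref{cor:CharSums} for the sum $S(x,y):=\sum_{u\neq 0}d_{\tilde\chi_u}\tilde\chi_u(x,y)$ at each $(x,y)\neq(0,0)$. When $x\neq 0$, only the $u=x$ term survives, giving $|S(x,y)| = 2^{2k}$. When $x=0$ and $y\neq 0$, every $u$ contributes and $S(0,y) = 2^{2k}\sum_{u\neq 0}(-1)^{q_u(y)}$; by IHP the map $u\mapsto H_u$ bijects $U\setminus\{0\}$ onto the $2^{2k+1}-1$ hyperplanes of $U$, of which exactly $2^{2k}-1$ contain the nonzero $y$, so the inner sum equals $-1$ and again $|S(0,y)| = 2^{2k}$. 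With $m_D = 2^{2k}(2^{2k+1}-1)$ and $|G| = 2^{4k+2}$, one checks $\sqrt{m_D(|G|-m_D)/(|G|-1)} = 2^{2k}$, matching the required flat value.

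The main obstacle is keeping the two halves of IHP straight: the range-dimension part drives the first paragraph by pinning down $Z(G_u)$ (and hence the count of linear characters), whereas the injectivity part drives the third by turning the sum over $u$ into a hyperplane count. Once both are in hand, the character-value analysis is a routine application of column orthogonality to a $2$-group with a small center.
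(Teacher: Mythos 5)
Your proof is correct and follows essentially the same route as the paper's: compute $Z(G_u)$ and $[G_u,G_u]$ via the antisymmetrized form and IHP, count conjugacy classes to get exactly two nonlinear irreducibles of degree $2^k$, show they vanish off the center (you use column orthogonality at a noncentral element where the paper uses row orthogonality --- equivalent), and then verify the flat-sum criterion by noting that for $x\neq 0$ only $u=x$ contributes while for $x=0$ the injectivity half of IHP turns the sum into a count over all hyperplanes (the paper phrases this as column orthogonality for $\widehat{Z(G)}$, which is the same computation). One small misstatement: $\lambda(u,0)$ need not be $\pm 1$ --- in the Suzuki case $(u,0)^2=(0,q_u(B(u,u)))=(0,1)$, so $Z(G_u)\cong\Z_4$ and $\lambda(u,0)=\pm i$ --- but since your argument only ever uses $|\chi_u|=2^k$ on the center, nothing downstream is affected.
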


\begin{proof}
We begin by computing some basic properties of $G_u$ with Proposition~\ref{prop:Bprods}. For any $x,y\in U$,
\[ (q_u \circ B)\caret(x,y) = q_u(\widehat{B}(x,y)) = \begin{cases}
0, &\text{if }\widehat{B}(x,y) \in H_u \\
1, & \text{otherwise}
\end{cases}\qquad (x,y \in U). \]
When $x = u$, we clearly get $(q_u\circ B)\caret(u,y) = 0$ for all $y \in U$. On the other hand, if $x \notin \{0,u\}$ then $H_x \neq H_u$, so there is some $y \in U$ with $(q_u \circ B)\caret(x,y) \neq 0$. Thus,
\begin{equation} \label{eq:IHP1}
Z(G_u) = \{0,u\} \times \F_2 \qquad \text{and} \qquad [G_u,G_u] = \{0\} \times \F_2.
\end{equation}
Moreover, for any $x\in U$ and $\epsilon \in \F_2$,  the conjugacy class of $(x,\epsilon)$ is
\[ (x,\epsilon)^{G_u} = \begin{cases}
\{(x,\epsilon)\}, & \text{if }x \in\{0,u\} \\
\{(x,0),\, (x,1)\}, & \text{otherwise.}
\end{cases} \]

Next, we find the degrees of the irreducible characters of $G_u$. All together, there are $2^n + 2$ conjugacy classes in $G_u$, for $2^n + 2$ irreducible characters. Of these,
\[ \frac{|G_u|}{|[G_u,G_u]|} = 2^n \]
have degree 1. We must show that the other two have degree $2^k$. Their degrees must divide $|G_u| = 2^{n+1}$, so we may assume they are $2^{k_1}$ and $2^{k_2}$ with $k_1 \leq k_2$. Since the squares of all the degrees of irreducible characters sum to $|G_u|$, we must have
\[ 2^n + 2^{2k_1} + 2^{2k_2} = 2^{n+1}, \]
or equivalently,
\begin{equation} \label{eq:IHP7}
1 + 2^{2k_2-2k_1} = 2^{n-2k_1}.
\end{equation}
Since $n$ is odd, the right-hand side is even. For this to be true of the left-hand side, we must have $2k_1 - 2k_2 = 0$. Then another look at \eqref{eq:IHP7} shows that $k_1 = k_2 = k$.

Let $\chi_u$ be one of the two irreducible characters of $G_u$ with degree $2^k$. We claim that
\begin{equation} \label{eq:IHP2}
\chi_u(x,\epsilon) = 0 \text{ for all $x \notin \{0,u\}$ and all $\epsilon \in \F_2$,}
\end{equation}
and that
\begin{equation} \label{eq:IHP3}
(0,1) \notin \ker \chi_u.
\end{equation}
For an irreducible representation corresponding to $\chi_{u}$, Schur's Lemma implies that $Z(G_{u})$ acts by scalar multiples of the identity. Hence, $|\chi_u(x,\epsilon)| = 2^k$ for all $(x,\epsilon) \in Z(G_u)$. The row orthogonality relations require that
\[ |G_u| = \sum_{(x,\epsilon) \in Z(G_u)} |\chi_u(x,\epsilon)|^2 + \sum_{(x,\epsilon) \notin Z(G_u)} | \chi_u(x,\epsilon)|^2, \]
so
\[ 2^{n+1} = 4\cdot 2^{2k} + \sum_{(x,\epsilon) \notin Z(G_u)} | \chi_u(x,\epsilon)|^2. \]
Since $4\cdot 2^{2k}$ already equals $2^{n+1}$, we conclude that $\chi_u(x,\epsilon) = 0$ whenever $(x,\epsilon) \notin Z(G_u)$. This proves \eqref{eq:IHP2}. Meanwhile, if $(0,1) \in \ker \chi_u$, then $\chi_u$ factors to give an irreducible character of the abelian group $G_u / [G_u,G_u]$, despite the fact that its degree is $2^k \neq 1$. (Here we use the fact that $n \geq 3$.) This establishes \eqref{eq:IHP3}.

Finally, let $\widetilde{\chi}_u$ be the pullback of $\chi_u$ to $G$ via $\varphi_u$. That is,
\begin{equation} \label{eq:IHP4}
\widetilde{\chi}_u(x,y) = \begin{cases}
\chi_u(x,0), & \text{if }y \in H_u \\
\chi_u(x,1), & \text{otherwise.}
\end{cases}
\end{equation}
We will prove that $\{\widetilde{\chi}_u\}_{u\in U \setminus \{0\}}$ is a hyperdifference set, and in particular that
\begin{equation} \label{eq:IHP5}
\left| \sum_{u \in U \setminus\{0\}} \widetilde{\chi}_u(x,y) \right| = 2^k \text{ for all $(x,y) \neq (0,0)$ in $G$.}
\end{equation}

If $x \in U\setminus\{0\}$, then \eqref{eq:IHP2} shows that $\widetilde{\chi}_u(x,y) = 0$ for all $u\neq x$. On the other hand, $\varphi_x(x,y) \in Z(G_x)$ by \eqref{eq:IHP1}, so $|\chi_x(x,y)| = 2^k$. Thus, \eqref{eq:IHP5} holds whenever $x\neq 0$.

It remains to prove \eqref{eq:IHP5} in the case where $x=0$. This is exactly the case where $(x,y) \in Z(G)$, by Proposition~\ref{prop:Bprods}: since $\ran L_u = H_u \neq \{0\}$ for all $u \in U\setminus\{0\}$, $Z(G) = \{0\} \times U$. In particular, $Z(G)$ has exactly $2^n$ distinct linear characters. For each $u \in U \setminus\{0\}$, let $\alpha_u \colon Z(G) \to \T$ be the central character of $\widetilde{\chi}_u$; in other words, $\widetilde{\chi}_u(0,y) = \alpha_u(0,y)\cdot 2^k$. From \eqref{eq:IHP4} and \eqref{eq:IHP3}, we see that
\[ \widetilde{\chi}_u(0,y) = \begin{cases}
\chi_u(0,0) = 2^k, & \text{if }y \in H_u \\
\chi_u(0,1) \neq 2^k, & \text{otherwise.}
\end{cases} \]
Therefore, $\ker \alpha_u = \{0\} \times H_u$.

By the injective hyperplane property, the characters $\alpha_u$ are all different. In particular, they exhaust all $2^n-1$ nontrivial characters of $Z(G)$. Now the column orthogonality relations for $Z(G)$ show that
\begin{equation} \label{eq:IHP6}
\sum_{u \in U\setminus\{0\}} \widetilde{\chi_u}(0,y) = 2^k\cdot \sum_{u \in U\setminus\{0\}} \alpha_u(0,y) = 2^k\cdot \left( \sum_{\alpha \in \widehat{Z(G)}} \alpha(0,y) - 1\right) = -2^k
\end{equation}
whenever $y \neq 0$. This completes the proof of \eqref{eq:IHP5}.
\end{proof}

\begin{rem} \label{rem:justHat}
The onus for creating a hyperdifference set in Theorem~\ref{thm:IHP} lies entirely on the antisymmetric map $\widehat{B}$, and not necessarily on $B$ itself. It can and sometimes does happen that a different bilinear map $B_0\colon U \times U \to V$ has $\widehat{B_0} = \widehat{B}$, while $U\times_{B_0} V \ncong U \times_B V$. In that case, the nonisomorphic group $U \times_{B_0} V$ also enjoys a hyperdifference set, which may produce a completely different ETF. For instance, all ten groups of order 64 mentioned in the first row of the table in Subsection \ref{sect:HDSCD} are related in this way.
\end{rem}

\begin{theorem} \label{thm:SuzIHP}
For any odd integer $n = 2k+1 \geq 3$, the $\mathbb{F}_2$-bilinear map $B \colon \mathbb{F}_{2^n} \times \mathbb{F}_{2^n} \to \mathbb{F}_{2^n}$ given by $B(\alpha, \beta) = \alpha \beta^2$ has the injective hyperplane property. Hence, $\mathbb{F}_{2^n}\times_B \mathbb{F}_{2^n}$ admits a hyperdifference set of $2^n - 1$ irreducible representations with constant degree $2^k$.
\end{theorem}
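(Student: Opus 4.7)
The plan is to verify both parts of the injective hyperplane property for $B(\alpha,\beta)=\alpha\beta^2$; the \emph{Hence}-clause then follows directly from Theorem~\ref{thm:IHP}. In characteristic $2$, the antisymmetric part is $\hat{B}(u,v)=uv^2+u^2v$, so the $\F_2$-linear operator $L_u\colon \F_{2^n}\to\F_{2^n}$ is $L_u(v)=u^2v+uv^2=uv(u+v)$. Hence $\ker L_u=\{0,u\}$, which has $\F_2$-dimension $1$, so $\ran L_u$ has $\F_2$-codimension $1$. This yields IHP~(i).

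For IHP~(ii), the key step is to put $L_u$ into a recognizable normal form via an Artin--Schreier substitution. Setting $v=ut$ gives $L_u(ut)=u^3(t^2+t)=u^3\wp(t)$, where $\wp(t)=t^2+t$ is the Artin--Schreier map. Since $t\mapsto ut$ is a bijection on $\F_{2^n}$,
\[ \ran L_u = u^3\cdot H_0, \qquad \text{where } H_0 := \wp(\F_{2^n}) = \ker(\tr) \]
is the trace-zero hyperplane, with $\tr\colon\F_{2^n}\to\F_2$ the absolute trace. Now suppose $\ran L_u=\ran L_{u'}$ for some $u,u'\in\F_{2^n}^{\times}$, and put $c=(u/u')^3$, so that $cH_0=H_0$. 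Then the $\F_2$-linear functional $x\mapsto\tr(cx)$ vanishes on the codimension-$1$ subspace $H_0=\ker(\tr)$, so it must be an $\F_2$-scalar multiple of $\tr$; nondegeneracy of the trace form rules out the zero multiple, leaving $\tr(cx)=\tr(x)$ for all $x$, hence $c=1$ by nondegeneracy again. Therefore $(u/u')^3=1$.

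The final step exploits the parity hypothesis. Since $n$ is odd, $2^n\equiv 2\pmod 3$, so $\gcd(3,2^n-1)=1$ and the cubing map is a bijection on $\F_{2^n}^{\times}$. Thus $(u/u')^3=1$ forces $u=u'$, which proves IHP~(ii). The existence of a hyperdifference set of cardinality $2^n-1$ consisting of irreducible representations all of degree $2^k$ is then immediate from Theorem~\ref{thm:IHP}. The main obstacle I anticipate is spotting the substitution $v=ut$ that exposes $\ran L_u$ as the single dilate $u^3 H_0$ of one fixed hyperplane; once that normal form is in hand, both parts of the IHP reduce to standard finite-field facts about the trace map and the arithmetic of $\gcd(3,2^n-1)$.
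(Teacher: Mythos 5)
Your proof is correct and follows essentially the same route as the paper: the substitution $v=ut$ exhibiting $\ran L_u = u^3\cdot\ker(\tr)$ is exactly the paper's factorization $\hat{B}(\alpha,\beta)=\alpha^3\bigl((\alpha^{-1}\beta)^2-(\alpha^{-1}\beta)\bigr)$, and the injectivity reduction to $\gcd(3,2^n-1)=1$ via $2^n\equiv 2\pmod 3$ is identical. The only differences are presentational: you verify IHP~(i) by the kernel count $\ker L_u=\{0,u\}$ and spell out via trace functionals why distinct cubes give distinct hyperplanes, details the paper folds into its preliminary discussion of the hyperplanes $X_\alpha$.
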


The resulting ETF has size $2^{n-1}(2^n-1) \times 2^{2n}$. The proof of Theorem~\ref{thm:SuzIHP} relies on the field trace $\tr \colon \F_{2^n} \to \F_2$, which is given by
\[ \tr(\alpha) = \alpha + \alpha^2 + \alpha^4 + \dotsb + \alpha^{2^{n-1}} \qquad (\alpha \in \F_{2^n}). \]
For background on this and other matters of finite fields, we refer the reader to \cite{LN83}. The field trace is linear, so it produces a bilinear form $\F_{2^n} \times \F_{2^n} \to \F_2$, called the \emph{trace form}, that maps $(\alpha,\beta) \mapsto \tr(\alpha \beta)$. The trace form is nondegenerate, so the linear functionals on $\F_{2^n}$ are precisely the maps $T_\alpha \colon \F_{2^n} \to \F_2$ given by $T_\alpha(\beta) = \tr(\alpha\beta)$ for $\alpha,\beta \in \F_{2^n}$. The elements of trace zero are precisely those in the subspace
\[ X_1 :=  \{ \beta^2 - \beta \colon \beta \in \F_{2^n} \}. \]
Hence, the hyperplanes in $\F_{2^n}$ are the spaces
\[ X_\alpha := \ker T_{\alpha^{-1}} = \{ \alpha(\beta^2 - \beta) \colon \beta \in \F_{2^n}\}\]
for $\alpha \in \F_{2^n} \setminus \{0\}$, and these are distinct.

\smallskip

\begin{proof}[Proof of Theorem~\ref{thm:SuzIHP}.]
For any $\alpha \in \F_{2^n} \setminus \{0\}$ and any $\beta \in \F_{2^n}$,
\[ \widehat{B}(\alpha,\beta) = \alpha \beta^2 - \alpha^2 \beta = \alpha^3\bigl( (\alpha^{-1} \beta)^2 - (\alpha^{-1} \beta) \bigr). \]
Thus,
\[ \ran L_\alpha = \{ \widehat{B}(\alpha,\beta) \colon \beta \in \F_{2^n} \} = X_{\alpha^3}. \]
To show that $B$ has IHP, we only have to show that $\alpha^3 \neq \beta^3$ when $\alpha \neq \beta$. Equivalently, we need to show that $\alpha^3 \neq 1$ when $\alpha \neq 1$. For this, we must use the fact that $n = 2k+1$ is odd, so that
\[ |\F_{2^n}^\times| = 2^n - 1 = 4^k\cdot 2 - 1 \equiv 1^k\cdot 2 - 1 \equiv 1 \mod 3. \]
Hence, no element of $\F_{2^n}^\times$ has order 3.
\end{proof}

\begin{rem}
After deriving and proving Theorem~\ref{thm:SuzIHP}, the authors learned that the groups it describes were studied as early as 1961 by G.~Higman \cite{Hig63}, who called them examples of \emph{Suzuki 2-groups}. In his terminology, $\mathbb{F}_{2^n} \times_B \mathbb{F}_{2^n} = A(n,\theta)$, where $\theta$ is the Frobenius automorphism $\theta(\alpha) = \alpha^2$ on $\mathbb{F}_{2^n}$. Interestingly, these groups have already caught the attention of the association scheme community as examples of a different phenomenon called \emph{self-duality} \cite{Ban93,Hanaki96,HanOku97}. When $n=3$, the correlation explained in Remark~\ref{rem:justHat} leads to ten non-isomorphic groups of order 64, each of which admits a hyperdifference set with the parameters in Theorem~\ref{thm:IHP}. These ten groups were the basis for our study of equiangular central group frames; they were also identified by Bannai \cite[Thm.\ 5.1]{Ban93} as the first examples of self-dual nonabelian groups. As a curiosity, we mention that every abelian group is self-dual. This means that every group currently known to admit a nontrivial hyperdifference set is self-dual.
\end{rem}

%CONNECTIONS WITH HEISENBERG GROUPS
\section{Connections with Heisenberg groups} \label{sec:Heis}

Theorem~\ref{thm:SuzIHP} establishes the existence of a hyperdifference set $D$ for $G = \F_{2^n} \times_B \F_{2^n}$, but it does not explain how to construct it. In this section, we examine $D$ in greater detail, giving an explicit description of both the Gram matrix $\G_D$ and the unitary representations involved in $D$. As we will see, the ETF we produce is essentially made from $2^n - 1$ copies of an expanded Heisenberg group over $\Z_2^k$. The architecture that fits these copies together is controlled by $\F_{2^n}^\times$, disguised as a subgroup of $\Aut(G)$ that spins the expanded Heisenberg group around to build our hyperdifference set.

Let $L^2(\Z_2^k)$ be the Hilbert space of functions $f \colon \Z_2^k \to \C$ with the inner product
\[ \langle f, g \rangle_{L^2(\Z_2^k)} = \sum_{x \in \Z_2^k} f(x) \overline{g(x)} \qquad (f,g \in L^2(\Z_2^k)). \]
Denote $e_0,\dotsc,e_{k-1}$ for the canonical basis of $\Z_2^k$, and give $\Z_2^k$ the usual dot product
\[ (\epsilon_0,\dotsc,\epsilon_{k-1})\cdot (\delta_0,\dotsc,\delta_{k-1}) = \sum_{j=0}^{k-1} \epsilon_j \delta_j \in \Z_2. \]
For $0 \leq s,t \leq k-1$ and $f\in L^2(\Z_2^k)$, we define the translation and modulation $T_s f, M_t f\in L^2(\Z_2^k)$ by $(T_s f)(x) = f(x- e_s)$ and $(M_t f)(x) = (-1)^{x\cdot e_t} f(x)$, respectively. The translation and modulation operators satisfy the power relations
\begin{equation} \label{eq:HeisPow}
T_s^2 = M_t^2 = I
\end{equation}
and the commuting relations
\begin{equation} \label{eq:HeisCom}
T_s T_t = T_t T_s, \qquad M_s M_t = M_t M_s, \qquad \text{and} \qquad T_s M_t = (-1)^{\delta_{s,t}} M_t T_s
\end{equation}
for all $s,t\in\{0,\dotsc, k-1\}$. They generate the \emph{Heisenberg group} over $\Z_2^k$,
\[ \mathbb{H}:= \langle T_s, M_t \colon 0 \leq s,t \leq k-1 \rangle \subset U(L^2(\Z_2^k)). \]
It has order $2^{2k+1} = 2^n$, and its natural representation on $L^2(\Z_2^k)$ is irreducible. (This is well known; for instance, see \cite{Pras09}.)  We will work with the slightly extended group $\mathbb{H}\langle i I \rangle = \mathbb{H} \cup i\mathbb{H}$, which has order $2^{n+1}$. Notably, this group was leveraged in \cite{CCKS} to construct extremal two-angle line sets. 

Our plan is to map $G$ onto $\mathbb{H}\langle i I \rangle$ using a certain basis for the space $X_1 \subset \F_{2^n}$ of trace-zero field elements. Since $1 \notin X_1$ is the unique $\alpha\in \F_{2^n}$ for which $\ker T_\alpha = X_1$, and since $\tr(\alpha^2) = \tr(\alpha)=0$ for all $\alpha \in X_1$, the trace form is nondegenerate and alternating on $X_1$. Consequently, $X_1$ admits a \emph{symplectic basis} $x_0,\dotsc,x_{k-1},\allowbreak y_0,\dotsc,y_{k-1}$, which means that $\tr(x_s x_t) = \tr(y_s y_t) = 0$ and $\tr(x_s y_t) = \delta_{s,t}$ for $0\leq s,t\leq k-1$; see \cite[Chapter~XV,~Section~8]{Lang}. We will deform our symplectic basis with the linear map $\theta\colon \F_{2^n} \to \F_{2^n}$ given by
\begin{equation} \label{eq:theta}
\theta(\alpha) = \alpha^{2^0} + \alpha^{2^2} + \dotsb + \alpha^{2^{n-1}} \qquad (\alpha \in \F_{2^n}).
\end{equation}
Our main result here is the following.

\begin{theorem} \label{thm:HeisRep}
Fix a symplectic basis $x_0,\dotsc,x_{k-1},\allowbreak y_0,\dotsc,y_{k-1} \in X_1$, and let $\alpha_s = \theta(x_s)$ and $\beta_t = \theta(y_t)$ for $0 \leq s,t \leq k-1$.
\begin{enumerate}[(i)]
\item There is a unique irreducible representation $\pi \colon G \to \mathbb{H}\langle i I \rangle$ with
\begin{align*}
\pi(\alpha_s,y) &= (-1)^{\tr(y)}\cdot i^{\tr(\alpha_s^3)} T_s, \\
\pi(\beta_t,y) &= (-1)^{\tr(y)}\cdot i^{\tr(\beta_t^3)} M_t,
\end{align*}
and $\pi(1,y) = (-1)^{\tr(y)}\cdot i I$ for all $y\in \F_{2^n}$ and all $s,t \in \{0,\dotsc,k-1\}$.

\item Each $\gamma\in \F_{2^n}^\times$ defines an automorphism $\psi_{\gamma} \in \Aut(G)$ with $\psi_\gamma(x,y) = (\gamma x, \gamma^3 y)$ for $x,y\in \F_{2^n}$.
\item $D = \{ \chi_\pi \circ \psi_{\gamma^{-1}} \}_{\gamma \in \F_{2^n}^\times}$ is a hyperdifference set for $\mathfrak{X}(G)$.
\end{enumerate}
\end{theorem}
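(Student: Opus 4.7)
Part (ii) is a direct check: using $B(u,x) = ux^2$, both $\psi_\gamma((u,v)(x,y))$ and $\psi_\gamma(u,v)\psi_\gamma(x,y)$ reduce to $(\gamma(u+x),\,\gamma^3 v + \gamma^3 y + \gamma^3 u x^2)$, since $(\gamma u)(\gamma x)^2 = \gamma^3 u x^2$; invertibility is immediate from $\gamma \in \F_{2^n}^\times$.

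Part (i) is the technical heart. I first observe that $\{1, \alpha_0, \dotsc, \alpha_{k-1}, \beta_0, \dotsc, \beta_{k-1}\}$ is an $\F_2$-basis of $\F_{2^n}$: expanding $\theta(\xi)^2$ term by term and using $\xi^{2^n} = \xi$ yields the identity $\theta(\xi)^2 + \theta(\xi) = \xi + \tr(\xi)$, so on $X_1$ the $\F_2$-linear Artin--Schreier operator $\phi(\alpha) = \alpha^2 + \alpha$ sends $\alpha_s \mapsto x_s$, $\beta_t \mapsto y_t$, and $1 \mapsto 0$; linear independence of the symplectic basis then forces any hypothetical dependence to be trivial. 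By Proposition~\ref{prop:Bprods}, $G$ is now presented on generators $(1,0), (\alpha_s, 0), (\beta_t, 0), (0, z)$ subject to squares $(u, 0)^2 = (0, u^3)$, commutators $[(u, 0), (u', 0)] = (0, \hat{B}(u, u'))$, and centrality of $(0, z)$. The squares unpack as $(i^{\tr(\alpha_s^3)} T_s)^2 = (-1)^{\tr(\alpha_s^3)} I$, matching $\pi(0, \alpha_s^3)$; and the relation $(1, 0)^2 = (0, 1)$ together with $\tr(1) = n$ odd forces $\pi(0, z) = (-1)^{\tr(z)} I$.

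The main obstacle is verifying the commutator relations. After substituting $\alpha_s^2 = \alpha_s + x_s$ and $\beta_t^2 = \beta_t + y_t$, every commutator check reduces to the symmetric bilinear identity
\[ S(a, b) := \tr(\theta(a)\, b) + \tr(a\, \theta(b)) = \tr(ab) \qquad (a, b \in X_1). \]
I plan to prove this by expanding $\theta(a) = \sum_{j=0}^{k} a^{2^{2j}}$ and invoking Frobenius invariance $\tr(x^{2^r}) = \tr(x)$: then $\tr(\theta(a)\,b)$ contributes terms $\tr(a b^{2^r})$ over $r \in \{n-2j \bmod n\}_{j=0}^k$, while $\tr(a\,\theta(b))$ contributes over $r \in \{2j\}_{j=0}^k$. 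Because $n$ is odd, these exponent sets intersect only at $r=0$ (where the two copies cancel in characteristic $2$) and together cover $\{0, 1, \dotsc, n-1\}$; hence $S(a,b) = \tr(a(\tr(b) + b)) = \tr(ab)$ whenever $b \in X_1$. The commutator checks then collapse to the symplectic identities $\tr(x_s x_{s'}) = \tr(y_t y_{t'}) = 0$ and $\tr(x_s y_t) = \delta_{s,t}$ (matching $[T_s, T_{s'}] = [M_t, M_{t'}] = I$ and $T_s M_t = (-1)^{\delta_{st}} M_t T_s$), and to $\tr(x_s) = \tr(y_t) = 0$ for commutators involving $(1, 0)$. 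Irreducibility of $\pi$ is then automatic: its image contains $T_s$ and $M_t$ up to invertible scalars, and together these generate the Heisenberg group $\mathbb{H}$ acting irreducibly on $L^2(\Z_2^k)$; uniqueness of $\pi$ is immediate from its specification on generators.

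For part (iii), I deduce the hyperdifference property from Theorem~\ref{thm:IHP}. The character $\chi_\pi$ is irreducible of degree $2^k$ with central character $(0, z) \mapsto (-1)^{\tr(z)}$, whose kernel is the hyperplane $X_1 = H_1$. Since $G$ has exactly two degree-$2^k$ irreducible characters with any prescribed nontrivial central character (a consequence of the conjugacy-class count in Proposition~\ref{prop:Bprods}), $\chi_\pi = \tilde{\chi}_1$ for one of the two choices permitted in Theorem~\ref{thm:IHP}. Composing with $\psi_{\gamma^{-1}}$ preserves irreducibility and shifts the central character to $(0, z) \mapsto (-1)^{\tr(\gamma^{-3} z)}$, whose kernel $X_{\gamma^3} = H_\gamma$ identifies $\chi_\pi \circ \psi_{\gamma^{-1}}$ with $\tilde{\chi}_\gamma$. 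Thus $D = \{\tilde{\chi}_\gamma : \gamma \in \F_{2^n}^\times\}$, which is a hyperdifference set by Theorem~\ref{thm:IHP}.
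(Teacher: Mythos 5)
Your argument is correct and is essentially the paper's proof in lightly different clothing. The paper routes everything through the quotient $G_1 = \F_{2^n}\times_{\tr\circ B}\F_2$: it presents $G_1$ by generators and relations (Lemma~\ref{lem:GensRels}), builds the isomorphism $\pi_1\colon G_1\cong \mathbb{H}\langle iI\rangle$ via von Dyck's theorem (Lemma~\ref{lem:HeisRep}), pulls back along $\varphi_1(x,y)=(x,\tr(y))$ to get $\pi$, and for (iii) chases the commuting square $\varphi_\gamma\circ\psi_\gamma=\tilde{\psi}_\gamma\circ\varphi_1$ to conclude $\chi_\gamma\circ\varphi_\gamma=\chi_\pi\circ\psi_{\gamma^{-1}}$. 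You instead work with a presentation of $G$ itself, prove the key identity $\langle\theta(a),\theta(b)\rangle=\tr(ab)$ on $X_1$ by direct Frobenius-orbit bookkeeping (the paper derives the same identity, \eqref{eq:tr2angl}, from the near-inverse relation $\eta\circ\theta=\mathrm{id}+\tr$), and in (iii) you identify $\chi_\pi\circ\psi_{\gamma^{-1}}$ with a permitted $\tilde{\chi}_\gamma$ by matching central characters and their kernels $\{0\}\times H_\gamma$ rather than by the explicit diagram; both identifications are legitimate because Theorem~\ref{thm:IHP} allows either of the two nonlinear characters of each quotient $G_u$. The one point to repair is a citation: Proposition~\ref{prop:Bprods} computes the center, commutator subgroup, and conjugacy classes of $U\times_B V$, but it does not assert that your listed generators and relations \emph{present} $G$; to get well-definedness of $\pi$ from relation-checking you still need the polycyclic/von Dyck order count that the paper carries out for $G_1$ in Lemma~\ref{lem:GensRels} (and your relation list should also include $(0,z)^2=1$ and enough central generators to exhaust $\{0\}\times\F_{2^n}$). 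This is routine and does not affect the correctness of the argument.
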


Combining Theorem~\ref{thm:HeisRep} with Theorem~\ref{thm:GrpFrm} gives an explicit description of the ETF associated with $D$. In fact, we have written GAP code that implements this theory and returns a short, fat matrix \cite{GitHub}. The automorphisms $\{ \psi_\gamma \colon \gamma \in \F_{2^n}^\times\}$ form a subgroup of $\Aut(G)$ isomorphic to $\F_{2^n}^\times$, and our hyperdifference set is an orbit of this group under its natural action on $\Irr(G)$. In this sense, the ETF we get has a form suggested by Thill and Hassibi~\cite{TH}.

\begin{rem}
In the spirit of constructivism, let us give one possible description of $\{\alpha_s\}_{s=0}^{k-1}$ and $\{\beta_t\}_{t=0}^{k-1}$. A field element $z\in \F_{2^n}$ generates a \emph{self-dual normal basis} if $\tr(z^{2^i}z^{2^j}) = \delta_{i,j}$ for $0 \leq i,j \leq n-1$. Self-dual normal bases are well studied in the finite field literature, and many constructions are known \cite{Lem75,LemWei88,SerLem80,Wang89}. Once we have such a generator, the reader can check that
\[ x_s = z^{2^{2s}} + z^{2^{2s+1}} \qquad \text{and} \qquad y_t = z^{2^{2t}} + \sum_{j=2t+2}^{n-1} z^{2^{j}} \qquad (0\leq s,t \leq k-1) \]
define a symplectic basis for $X_1$. To compute $\alpha_{s}$ and $\beta_{t}$ we first note the following easily verified formulas
\[\theta(\alpha)+\theta(\alpha^{2}) = \alpha + \tr(\alpha)\quad\text{and}\quad \theta(\alpha)+\theta(\alpha^{4}) = \alpha+\alpha^2.\]
Using these and the observation that $\tr(z^{2^j}) = \tr(z^{2^{j-1}}z^{2^{j-1}}) = 1$ for any $j$, we have
\[\alpha_{s} = \theta(x_{s}) = \theta(z^{2^{2s}}) + \theta\big((z^{2^{2s}})^2\big) = z^{2^{2s}} + \tr(z^{2^{2s}}) = z^{2^{2s}} +1,\]
and
\begin{align*}
\beta_{t} & = \theta(z^{2^{2t}}) + \theta\big((z^{2^{2t}})^4\big) + \sum_{j=2t+3}^{n-1}\theta(z^{2^j}) = z^{2^{2t}} + z^{2^{2t+1}} + \sum_{j=2t+3}^{n-1}\theta(z^{2^j})\\
 & = z^{2^{2t}} + z^{2^{2t+1}} + \sum_{r=t+1}^{k-1}\Big[\theta(z^{2^{2r+1}}) + \theta\big((z^{2^{2r+1}})^{2}\big)\Big] = z^{2^{2t}} + z^{2^{2t+1}} + \sum_{r=t+1}^{k-1} \Big[z^{2^{2r+1}} + \tr(z^{2^{2r+1}})\Big]\\
 & = \begin{cases}
 z^{2^{2t}} + \sum_{j=t}^{k-1} z^{2^{2j+1}} + 1 , & \text{if $k-t$ is even} \\[5 pt]
 z^{2^{2t}} + \sum_{j=t}^{k-1} z^{2^{2j+1}}, & \text{if $k-t$ is odd}
 \end{cases}
\end{align*}
for $0 \leq s,t \leq k-1$.
\end{rem}

The proof of Theorem~\ref{thm:HeisRep} relies on the quotient group $G_1 = \F_{2^n} \times_{\tr \circ B} \F_2$, whose multiplication is given by
\begin{equation} \label{eq:G1Mult}
(\alpha,\epsilon) \cdot (\beta,\delta) = (\alpha + \beta, \epsilon + \delta + \tr(\alpha \beta^2))\qquad (\alpha,\beta \in \F_{2^n};\, \epsilon, \delta \in \F_2).
\end{equation}
For $\alpha, \beta \in \F_{2^n}$, we define
\begin{equation}\label{eq:AnglFrm}
\langle \alpha, \beta \rangle = \tr(\alpha \beta^2 - \alpha^2\beta).
\end{equation}
This is a symmetric bilinear form on $\F_{2^n}$, and in fact $\langle \cdot, \cdot \rangle = (\tr \circ B)\caret(\cdot,\cdot)$. With this notation, we have the following presentation of $G_1$.

\begin{lemma} \label{lem:GensRels}
Let $\gamma_1,\dotsc,\gamma_n$ be a basis for $\F_{2^n}$ over $\F_2$. For $i=1,\dots,n$, define $f_i = (\gamma_i,0) \in G_1$; also define $f_{n+1} = (0,1) \in G_1$. Then $f_1,\dotsc,f_{n+1}$ generate $G_1$, and they satisfy the power relations
\[ P = \{ f_1^2 = f_{n+1}^{\tr(\gamma_1^3)},\dotsc, f_n^2 = f_{n+1}^{\tr(\gamma_n^3)},\, f_{n+1}^2 = 1_{G_1} \} \]
and the commuting relations
\[ C = \{ f_i f_j = f_j f_i f_{n+1}^{\langle \gamma_i, \gamma_j \rangle} \colon 1 \leq i,j \leq n\} \cup \{ f_i f_{n+1} = f_{n+1} f_i \colon 1 \leq i \leq n \}. \]
In fact, $G_1 \cong \langle f_1,\dotsc,f_{n+1} \mid P \cup C \rangle$.
\end{lemma}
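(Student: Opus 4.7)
The plan is to verify the three assertions separately, saving the presentation claim for last. First, I would check the power and commuting relations by direct computation in $G_1$ using the multiplication formula \eqref{eq:G1Mult}. For instance,
\[ f_i^2 = (\beta_i,0)(\beta_i,0) = (0,\tr(\beta_i\cdot \beta_i^2)) = (0,\tr(\beta_i^3)) = f_{n+1}^{\tr(\beta_i^3)}, \]
and $f_{n+1}^2 = (0,1)(0,1) = (0,0) = 1_{G_1}$. For the main commuting relation,
\[ f_i f_j = (\beta_i+\beta_j,\tr(\beta_i\beta_j^2)) \quad\text{and}\quad f_j f_i = (\beta_i+\beta_j,\tr(\beta_i^2\beta_j)), \]
so solving $f_i f_j = f_j f_i \cdot (0,c)$ gives $c = \tr(\beta_i\beta_j^2) + \tr(\beta_i^2\beta_j) = \tr(\beta_i\beta_j^2 - \beta_i^2\beta_j) = \langle \beta_i,\beta_j\rangle$ (using characteristic $2$), matching $f_{n+1}^{\langle\beta_i,\beta_j\rangle}$. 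The relation $f_i f_{n+1} = f_{n+1} f_i$ is immediate because $\tr(\beta_i\cdot 0) = \tr(0\cdot \beta_i^2) = 0$.

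Next, I would establish generation: since $\beta_1,\dotsc,\beta_n$ is an $\F_2$-basis for $\F_{2^n}$, every $\alpha \in \F_{2^n}$ has a unique expansion $\alpha = \sum_{i=1}^n a_i\beta_i$ with $a_i \in \F_2$. Iterating \eqref{eq:G1Mult} shows that $f_1^{a_1}\cdots f_n^{a_n} = (\alpha,\delta)$ for some $\delta \in \F_2$ depending on the $a_i$, and then $(\alpha,\epsilon) = f_1^{a_1}\cdots f_n^{a_n}\cdot f_{n+1}^{\epsilon+\delta}$ expresses an arbitrary element of $G_1$ as a word in the $f_i$.

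For the presentation claim, let $H = \langle f_1,\dotsc,f_{n+1} \mid P\cup C\rangle$. Because the concrete elements $f_i \in G_1$ satisfy the same relations (just verified), the universal property of presentations gives a group homomorphism $\varphi \colon H \to G_1$ sending each abstract generator to its namesake in $G_1$, and this map is surjective by the previous paragraph. It remains to show $|H| \leq 2^{n+1} = |G_1|$, after which $\varphi$ must be an isomorphism. To bound $|H|$, I would produce a normal form for elements of $H$: the commuting relations in $C$ let one move all occurrences of $f_{n+1}$ to the rightmost position of any word and then sort the $f_i$'s ($1 \leq i \leq n$) into ascending order, at the cost of extra $f_{n+1}$ factors on the right; the power relations in $P$ then reduce each exponent on $f_i$ to an element of $\{0,1\}$ (again shifting $f_{n+1}$ factors rightward), and $f_{n+1}^2 = 1$ reduces its exponent to $\{0,1\}$ too. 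Every element of $H$ is therefore expressible as $f_1^{a_1}\cdots f_n^{a_n} f_{n+1}^{a_{n+1}}$ with each $a_i \in \{0,1\}$, giving at most $2^{n+1}$ elements, as required.

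The main obstacle is the normal-form reduction in the last step: one needs to be careful that each rewriting rule only introduces $f_{n+1}$ factors (which commute with everything) and never a $f_j$ with $j < i$ when sorting. Both $P$ and $C$ have exactly this shape, so an induction on the length of a word, followed by an induction on the number of inversions among the $f_i$-indices, will finish the argument cleanly.
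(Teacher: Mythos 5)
Your proposal is correct and follows essentially the same route as the paper: verify the relations directly from \eqref{eq:G1Mult}, show generation by expanding $\alpha$ in the basis, and then combine the von Dyck epimorphism $H \to G_1$ with the bound $|H| \leq 2^{n+1}$ to conclude. The only difference is that the paper obtains that bound by citing the fact that $P \cup C$ is a polycyclic presentation, whereas you spell out the underlying collection/normal-form argument explicitly; this is a fleshing-out of the same step rather than a different approach.
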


\begin{proof}
It is easy to check that $f_1,\dotsc,f_{n+1}$ satisfy $P$ and $C$ using \eqref{eq:G1Mult}; we leave this to the reader. To see that $f_1,\dotsc,f_{n+1}$ generate $G_1$, let $\alpha \in \F_{2^n}$ be arbitrary, and write
\[ \alpha = \sum_{i=1}^n \epsilon_i \gamma_i \]
for some $\epsilon_i \in \F_2$. Then
\[ \prod_{i=1}^n f_i^{\epsilon_i} = (\epsilon_1 \gamma_1,0)\dotsb (\epsilon_n \gamma_n, 0) = (\alpha,\delta) \]
for some $\delta \in \F_2$. Hence,
\[ \{(\alpha,0), (\alpha,1)\} = \left\{ \prod_{i=1}^n f_i^{\epsilon_i},\ f_{n+1}\cdot \prod_{i=1}^n f_i^{\epsilon_i} \right\} \subset \langle f_1,\dotsc,f_{n+1} \rangle. \]
As $\alpha \in \F_{2^n}$ was arbitrary, we conclude that $f_1,\dotsc,f_{n+1}$ generate $G_1$.

Finally, let $H = \langle f_1,\dotsc, f_{n+1} \mid P \cup C \rangle$. This is a polycyclic presentation (see e.g.\ \cite{HEO}), so every element of $H$ can be written in the form $\prod_{i=1}^{n+1} f_i^{\epsilon_i}$ with $\epsilon_i \in \{0,1\}$. In particular, $|H| \leq 2^{n+1}$. Now, von Dyck's Theorem supplies an epimorphism $H \to G_{1}$, and by comparing orders, we conclude that this surjection is an isomorphism.
\end{proof}

Our plan is to construct an isomorphism $\pi_1 \colon G_1 \cong \mathbb{H}\langle i I \rangle$ and then pull it back through an epimorphism $\varphi_1 \colon G \to G_1$. In order to build $\pi_1$, we need a way to see the symplectic structure on $X_1$ through the bilinear form $\langle \cdot, \cdot \rangle$.

Let $\theta \colon \F_{2^n} \to \F_{2^n}$ be given by \eqref{eq:theta}, and let $\eta \colon \F_{2^n} \to \F_{2^n}$ be the linear map with $\eta(\alpha) = \alpha^2 + \alpha$ for $\alpha \in \F_{2^n}$. These functions are very nearly inverses, in the sense that
\begin{align*}
\eta(\theta(\alpha)) &= \left(\alpha^{2^0} + \alpha^{2^2} + \dotsb + \alpha^{2^{n-1}}\right)^2  + \left(\alpha^{2^0} + \alpha^{2^2} + \dotsb + \alpha^{2^{n-1}}\right) \\
&= \left(\alpha^{2^1} + \alpha^{2^3} + \dotsb + \alpha^{2^n}\right) +  \left(\alpha^{2^0} + \alpha^{2^2} + \dotsb + \alpha^{2^{n-1}}\right) \\
&= \left(\alpha^{2^0} + \dotsb + \alpha^{2^{n-1}}\right) + \alpha^{2^n} \\
&= \alpha + \tr(\alpha)
\end{align*}
and similarly $\theta(\eta(\alpha)) = \alpha + \tr(\alpha)$ for $\alpha \in \F_{2^n}$. Since $n$ is odd, $\tr(1) = 1$, and therefore $\tr(\tr(\alpha)) = \tr(\alpha) = \tr\bigl(\alpha^{2^j}\bigr)$ for all $\alpha \in \F_{2^n}$ and all $j \geq 0$. It follows that $\eta$ and $\theta$ both map $\F_{2^n}$ into $X_1$. With this in mind, the equations above show that $\eta$ and $\theta$ restrict to inverse isomorphisms $X_1 \cong X_1$. These maps serve as bridges between the trace form and $\langle \cdot , \cdot \rangle$: for any $\alpha, \beta \in \F_{2^n}$,
\[ \langle \alpha, \beta \rangle = \tr(\alpha \beta^2 + \alpha^2 \beta) = \tr\left(\eta(\alpha)\cdot \eta(\beta) \right). \]
Thus, when $x,y\in X_1$,
\begin{equation} \label{eq:tr2angl}
\langle \theta(x), \theta(y) \rangle = \tr(xy).
\end{equation}

\begin{lemma} \label{lem:HeisRep}
Let $x_0,\dotsc,x_{k-1},\allowbreak y_0,\dotsc,y_{k-1} \in X_1$ be a symplectic basis for $X_1$, and let $\alpha_s = \theta(x_s)$ and $\beta_t = \theta(y_t)$ for $0\leq s,t, \leq k-1$. There is a unique isomorphism $\pi_1 \colon G_1 \cong \mathbb{H}\langle i I \rangle$ with 
\[ \pi_1(1,0) = i I, \qquad \pi_1(\alpha_s,0) = i^{\tr(\alpha_s^3)} T_s, \qquad \text{and} \qquad \pi_1(\beta_t,0) = i^{\tr(\beta_t^3)} M_t \]
 for $0\leq s,t \leq k-1$.
\end{lemma}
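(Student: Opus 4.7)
The plan is to present $G_1$ using Lemma~\ref{lem:GensRels} applied to a carefully chosen basis, define the homomorphism on generators, verify its defining relations inside $\mathbb{H}\langle iI\rangle$, and conclude by von Dyck's theorem, using surjectivity and order matching to upgrade to an isomorphism.

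I would first show that $\{1,\alpha_0,\dots,\alpha_{k-1},\beta_0,\dots,\beta_{k-1}\}$ is an $\F_2$-basis of $\F_{2^n}$. The symplectic basis $x_s,y_t$ spans $X_1$ (of dimension $2k$), and $\theta$ restricts to a bijection $X_1\to X_1$, so the images $\alpha_s,\beta_t$ form a basis of $X_1$; since $n$ is odd, $\tr(1)=1$, so $1\notin X_1$, and adjoining it gives a full basis. Lemma~\ref{lem:GensRels} then presents $G_1$ via generators $f_1=(1,0)$, $f_{1+s}=(\alpha_{s-1},0)$, $f_{k+1+t}=(\beta_{t-1},0)$, $f_{n+1}=(0,1)$, subject to power relations $f_i^2=f_{n+1}^{\tr(b_i^3)}$, $f_{n+1}^2=1$, and commuting relations $f_if_j=f_jf_if_{n+1}^{\langle b_i,b_j\rangle}$, where $b_i$ denotes the $i$-th basis element.

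I would then declare the candidate map
\[ f_1\mapsto iI,\qquad f_{1+s}\mapsto i^{\tr(\alpha_{s-1}^3)}T_{s-1},\qquad f_{k+1+t}\mapsto i^{\tr(\beta_{t-1}^3)}M_{t-1},\qquad f_{n+1}\mapsto -I, \]
and verify that all defining relations hold inside $\mathbb{H}\langle iI\rangle$. The power relations reduce to $T_s^2=M_t^2=I$, the identity $(i^a)^2=(-I)^a$ for $a\in\F_2$, and the calculation $(iI)^2=-I=(-I)^{\tr(1^3)}$. For the commuting relations, I would invoke equation~\eqref{eq:tr2angl} together with the symplectic basis condition to compute $\langle 1,b\rangle=\tr(b^2+b)=0$ for all $b\in X_1$, $\langle\alpha_s,\alpha_{s'}\rangle=\tr(x_sx_{s'})=0$, $\langle\beta_t,\beta_{t'}\rangle=\tr(y_ty_{t'})=0$, and $\langle\alpha_s,\beta_t\rangle=\tr(x_sy_t)=\delta_{s,t}$. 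These exactly match the commutators in $\mathbb{H}\langle iI\rangle$, since $iI$ is central and $T_sM_t = M_tT_s(-I)^{\delta_{s,t}}$ by~\eqref{eq:HeisCom}; the phase factors $i^{\tr(\cdot)}$ cancel cleanly because the same scalars appear on both sides of each commuting relation.

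By von Dyck's theorem, the assignment extends to a homomorphism $\pi_1\colon G_1\to\mathbb{H}\langle iI\rangle$, unique subject to the stated values since the three listed families, together with $(0,1)=(1,0)^2$, generate $G_1$. Surjectivity is immediate: $iI$ lies in $\pi_1(G_1)$, its powers absorb the phases, so every $T_{s-1}$ and $M_{t-1}$ lies in $\pi_1(G_1)$, and these together with $iI$ generate $\mathbb{H}\langle iI\rangle$. Since $|G_1|=2^{n+1}=|\mathbb{H}\langle iI\rangle|$, surjectivity upgrades to bijectivity, giving the desired isomorphism. The main obstacle is purely bookkeeping: one must correctly absorb the phase factors $i^{\tr(\alpha_s^3)}$ and $i^{\tr(\beta_t^3)}$ into the power relations (where they square to $(-I)^{\tr(\cdot)}$) while ensuring they cancel in the commuting relations, which they do since they appear identically on each side.
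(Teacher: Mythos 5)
Your proposal is correct and follows essentially the same route as the paper's proof: both pass to the presentation of $G_1$ from Lemma~\ref{lem:GensRels} using the basis $\alpha_0,\dotsc,\alpha_{k-1},\beta_0,\dotsc,\beta_{k-1},1$, verify the power and commuting relations in $\mathbb{H}\langle iI\rangle$ via \eqref{eq:tr2angl}, \eqref{eq:HeisPow}, and \eqref{eq:HeisCom}, and conclude by von Dyck's Theorem together with surjectivity and the order count $|G_1| = |\mathbb{H}\langle iI\rangle| = 2^{n+1}$. You simply spell out the relation-checking and phase bookkeeping that the paper leaves to the reader.
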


\begin{proof}
First, observe that $\alpha_0,\dotsc,\alpha_{k-1},\allowbreak \beta_0,\dotsc,\beta_{k-1}$ is a basis for $X_1$, since $\theta$ restricts to an isomorphism $X_1 \cong X_1$. Thus, $\alpha_0,\dotsc,\alpha_{k-1},\allowbreak \beta_0,\dotsc,\beta_{k-1},1$ is a basis for $\F_{2^n}$. In order to more easily use Lemma~\ref{lem:GensRels} we will set
\[\gamma_{i} = \begin{cases} \alpha_{i-1} & i=1,\ldots,k,\\ \beta_{i-k-1} & i=k+1,\ldots,2k,\\ 1 & i=2k+1=n,\end{cases}\]
$f_{i} = (\gamma_{i},0)$ for $1\leq i\leq n$, and $f_{n+1} = (0,1)$.

Set
\[g_{j} = \begin{cases}i^{\tr(\alpha_{j-1}^3)}T_{j-1} & j=1,\ldots,k\\ i^{\tr(\beta_{j-k-1}^3)}M_{j-k-1} & j=k+1,\ldots,2k\\ iI & j=2k+1=n,\\ -I & j=2k+2=n+1.\end{cases}\]
From \eqref{eq:HeisPow} we see that $g_{j}^2 = g_{n+1}^{\tr(\gamma_{j}^{3})}$ for $1\leq j\leq n$, and clearly $g_{n+1}^2 = 1_{\mathbb{H}\langle iI\rangle}$. This shows that $g_{1},\ldots,g_{n+1}$ satisfy the relations $P$ from Lemma \ref{lem:GensRels}.

From \eqref{eq:tr2angl} we see that
\[ \langle 1, \alpha_s \rangle = \langle 1, \beta_t \rangle = \langle \alpha_s, \alpha_t \rangle = \langle \beta_s, \beta_t \rangle = 0 \qquad \text{and} \qquad \langle \alpha_s,\beta_t \rangle = \delta_{s,t} \]
for $0 \leq s,t \leq k-1$. From this and \eqref{eq:HeisCom} we deduce that $g_{i}g_{j} = g_{j}g_{i}g_{n+1}^{\langle \gamma_{i},\gamma_{j}\rangle}$ for $1\leq i,j\leq n$ and $g_{i}g_{n+1} = g_{n+1}g_{i}$ for $1\leq i\leq n$. That is, $g_{1},\ldots,g_{n+1}$ satisfy the relations $C$ from Lemma \ref{lem:GensRels}.

Now von Dyck's Theorem gives the existence of a homomorphism $\pi_1 \colon G_1 \to \mathbb{H}\langle i I \rangle$ such that $\pi_{1}(f_{i}) = g_{i}$ for $1\leq i\leq n+1$. Since the image of $\pi_1$ generates $\mathbb{H}\langle i I \rangle$, and since $| G_1 | = | \mathbb{H} \langle i I \rangle |$, $\pi_1$ must be an isomorphism.
\end{proof}

Now we can prove our main result.

\begin{proof}[Proof of Theorem~\ref{thm:HeisRep}]
Let $\varphi_1 \colon G \to G_1$ be the epimorphism with $\varphi_1(x,y) = (x,\tr(y))$ for $x,y\in\F_{2^n}$, and let $\pi_1 \colon G_1 \to U(L^2(\Z_2^k))$ be the representation given by Lemma~\ref{lem:HeisRep}. Then
\[ \pi_1(x,\epsilon) = \pi_1(x,0)\cdot \pi_1(0,\epsilon) = (-1)^\epsilon\cdot \pi_1(x,0) \]
for all $x \in \F_{2^n}$ and all $\epsilon \in \F_2$, so the pullback $\pi = \pi_1 \circ \varphi_1$ is exactly as we have described in the theorem statement. Moreover, $\alpha_0,\dotsc,\alpha_{k-1},\allowbreak \beta_0,\dotsc,\beta_{k-1},1$ is a basis for $\F_{2^n}$, so the set
\[ \{ (\alpha_s,y),\, (\beta_t,y),\, (1,y) : y \in \F_{2^n} \} \]
generates $G$. Thus, the images we have given uniquely determine $\pi$. Since $\mathbb{H} \subset \pi(G)$ already acts irreducibly on $L^2(\Z_2^k)$, $\pi$ is irreducible. This completes the proof of (i). The proof of (ii) is an easy exercise. 

For (iii), remember that we get the characters in our hyperdifference set by modding out the hyperplanes guaranteed by IHP and pulling back characters of the quotients. In the proof of Theorem~\ref{thm:SuzIHP}, we saw that the hyperplane corresponding to $\gamma \in \F_{2^n}^\times$ is
\[ H_\gamma = X_{\gamma^3} = \{ x \in \F_{2^n} \colon \tr(\gamma^{-3} x) = 0\}. \]
The map we called $q_\gamma \colon \F_{2^n} \to \F_2$ in the lead-up to Theorem~\ref{thm:IHP} is the same as the one we called $T_{\gamma^{-3}}$ in the discussion after Theorem~\ref{thm:SuzIHP}; it has $q_\gamma(y) = \tr(\gamma^{-3} x)$ for $x \in \F_{2^n}$. Therefore, multiplication in 
\[ G_\gamma = \F_{2^n} \times_{q_\gamma \circ B} \F_2 \cong G / (\{0\} \times H_\gamma) \]
is given by
\[ (x,\epsilon) \cdot (y,\delta) = (x+y,\epsilon + \delta + \tr(\gamma^{-3} x y^2)) \qquad (x,y \in \F_{2^n};\, \epsilon, \delta \in \F_2), \]
and the epimorphism $\varphi_\gamma \colon G \to G_\gamma$ has the neat formula 
\begin{equation}\label{eq:EpAlpha}
\varphi_\gamma(x,y) = (x,\tr(\gamma^{-3} y)) \qquad (x,y \in \F_{2^n}).
\end{equation}
The reader can check that ${\psi_\gamma(\{0\} \times H_1)} = {\{0\} \times H_\gamma}$. Thus, $\psi_\gamma$ factors to give an isomorphism $\widetilde{\psi}_\gamma \colon G_1 \to G_\gamma$; in particular, $\widetilde{\psi}_\gamma(x,\epsilon) = (\gamma x, \epsilon)$ for $x\in \F_{2^n}$ and $\epsilon \in \F_2$. The relationship between these functions is summarized below.
\[ \xymatrix{
G \ar[r]^{\sim}_{\psi_\gamma} \ar@{->>}[d]_{\varphi_1} & G \ar@{->>}[d]^{\varphi_\gamma} \\
G_1 \ar[r]^{\sim}_{\widetilde{\psi}_\gamma} & G_\gamma
} \]
Now $\chi_\gamma := \chi_{\pi_1} \circ \widetilde{\psi}_\gamma^{-1}$ is an irreducible character of $G_\gamma$, and its pullback to $G$ is
\begin{equation} \label{eq:PullAut}
\chi_\gamma \circ \varphi_\gamma = \chi_{\pi_1} \circ \widetilde{\psi}_\gamma^{-1} \circ \varphi_\gamma = \chi_{\pi_1} \circ \varphi_1 \circ \psi_{\gamma}^{-1} = \chi_\pi \circ \psi_{\gamma^{-1}}.
\end{equation}
By Theorem~\ref{thm:IHP}, $D = \{ \chi_\gamma \circ \varphi_\gamma \}_{\gamma \in \F_{2^n}^\times}$ is a hyperdifference set for $\mathfrak{X}(G)$.
\end{proof}

We end with an explicit representation of the Gram matrix $\G_D$,
which may be useful, for example, in the estimation of its restricted
isometry constants \cite{BFMW13}. We leave this investigation for future work.

\begin{cor}
Let $D$ be as in Theorem~\ref{thm:HeisRep}. Then $\G_D$ is the $G\times G$ matrix with entries
\[ (\G_D)_{(x,y),(\alpha,\beta)} = \begin{cases}
\frac{1}{2} - \frac{1}{2^{n+1}}, & \text{if }x = \alpha \text{ and } y = \beta \\
- \frac{1}{2^{n+1}}, & \text{if }x = \alpha \text{ and }y \neq \beta \\
\frac{i}{2^{n+1}} \cdot  (-1)^{\tr( (x-\alpha)^{-3} (y - \beta + \alpha^3 + x\alpha^2) )}, & \text{otherwise}
\end{cases} \]
for $x,y,\alpha,\beta \in \F_{2^n}$.
\end{cor}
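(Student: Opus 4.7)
The plan is to insert the hyperdifference set $D = \{\chi_\pi \circ \psi_{\gamma^{-1}}\}_{\gamma \in \F_{2^n}^\times}$ of Theorem~\ref{thm:HeisRep} directly into the Gram matrix formula \eqref{eq:GrpGrm}. With $|G| = 2^{2n}$ and $d_\chi = 2^k$ for every $\chi \in D$, this produces
\[
(\G_D)_{g,h} \;=\; \frac{2^k}{2^{2n}} \sum_{\gamma \in \F_{2^n}^\times} \chi_\pi\bigl(\psi_{\gamma^{-1}}(u)\bigr),
\]
where $u \in G$ is determined from $g = (x,y)$ and $h = (\alpha,\beta)$ by the convention of \eqref{eq:GrpGrm}. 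A direct computation in $G$, using $(\alpha,\beta)^{-1} = (\alpha,\beta+\alpha^3)$ in characteristic 2, yields $(x,y)(\alpha,\beta)^{-1} = (x+\alpha,\, y+\beta+\alpha^3+x\alpha^2)$. Applying $\psi_{\gamma^{-1}}$ scales these coordinates by $\gamma^{-1}$ and $\gamma^{-3}$, respectively.

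The next step is to factor each summand. Writing $\pi = \pi_1 \circ \varphi_1$ and using $\pi_1(v,\epsilon) = (-1)^\epsilon \pi_1(v,0)$ (which follows from Lemma~\ref{lem:HeisRep} since $\pi_1(1,0)^2 = (iI)^2 = -I$), one separates a sign $(-1)^{\tr(\gamma^{-3}(y+\beta+\alpha^3+x\alpha^2))}$ from a scalar $\tr \pi_1(\gamma^{-1}(x+\alpha),0)$. The key structural observation is that $\tr \pi_1(v,0)$ vanishes unless $\pi_1(v,0)$ is a scalar operator on $L^2(\Z_2^k)$: computing matrix entries of $T^aM^b$ on the basis of delta functions shows $\tr(T^a M^b) = 0$ unless $(a,b) = (0,0)$, and the four scalar elements $\{\pm I,\pm iI\}$ of $\mathbb{H}\langle iI\rangle$ correspond under $\pi_1$ exactly to the central subgroup $Z(G_1) = \F_2\cdot 1 \times \F_2$. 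Consequently $\tr\pi_1(v,0) = 2^k$ when $v=0$, equals $i\cdot 2^k$ when $v=1$, and vanishes for all other $v\in \F_{2^n}$.

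With these pieces in hand, the proof splits cleanly into the three cases of the corollary. When $(x,y) = (\alpha,\beta)$, every summand contributes the trivial trace $2^k$, yielding $(2^n-1)\cdot 2^{2k}/2^{2n} = \tfrac{1}{2}-\tfrac{1}{2^{n+1}}$. When $x=\alpha$ but $y\ne \beta$, the trace factor is still $2^k$ and the computation reduces to the character sum $\sum_{\gamma \in \F_{2^n}^\times}(-1)^{\tr(\gamma^{-3}(y+\beta))}$; since $\gcd(3, 2^n-1) = 1$ for odd $n$ (as used in the proof of Theorem~\ref{thm:SuzIHP}), the map $\gamma \mapsto \gamma^{-3}$ is a bijection of $\F_{2^n}^\times$, and the sum becomes $\sum_{\delta \in \F_{2^n}^\times}(-1)^{\tr(\delta(y+\beta))} = -1$ by additive character orthogonality applied to the nonzero element $y+\beta$, giving $-1/2^{n+1}$. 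In the remaining case $x\ne\alpha$, the scalar condition $\gamma^{-1}(x+\alpha) = 1$ singles out the unique $\gamma = x+\alpha$ as the only contributing term, with trace $i\cdot 2^k$, producing the stated $(i/2^{n+1})(-1)^{\tr(\cdots)}$ expression. The main obstacle is essentially bookkeeping: one must track the convention of \eqref{eq:GrpGrm} carefully so that the natural exponent from the group multiplication coincides with the precise form $\tr((x-\alpha)^{-3}(y-\beta+\alpha^3+x\alpha^2))$ written in the corollary, which is routine but easy to mishandle by a sign.
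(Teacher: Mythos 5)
Your proposal is correct and follows essentially the same route as the paper: substitute $D=\{\chi_\pi\circ\psi_{\gamma^{-1}}\}$ into \eqref{eq:GrpGrm}, reduce to $\sum_\gamma \tilde{\chi}_\gamma(x-\alpha,\,y-\beta+\alpha^3+x\alpha^2)$, and split into the same three cases, with the $x\neq\alpha$ case collapsing to the single term $\gamma=x-\alpha$ and the $x=\alpha$, $y\neq\beta$ case handled by summing the central characters. The only difference is cosmetic: where the paper cites the facts already established in the proof of Theorem~\ref{thm:IHP} (namely \eqref{eq:IHP2} for the vanishing off the center and \eqref{eq:IHP6} for the central sum $-2^k$), you re-derive them concretely via $\tr(T^aM^b)=0$ for $(a,b)\neq(0,0)$ and via the cubing bijection on $\F_{2^n}^\times$ together with additive character orthogonality.
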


\begin{proof}
We continue the notation used in the proof of Theorem~\ref{thm:HeisRep}. For each $\gamma \in \F_{2^n}^\times$, let $\widetilde{\chi}_\gamma = \chi_\gamma \circ \varphi_\gamma = \chi_\pi \circ \psi_{\gamma^{-1}}$, as in \eqref{eq:PullAut}. By \eqref{eq:GrpGrm},
\begin{align*}
\G_{(x,y),(\alpha,\beta)} &= \frac{2^k}{2^{2n}} \sum_{\gamma \in \F_{2^n}^\times} \widetilde{\chi}_\gamma \left( (x,y) \cdot (\alpha,\beta)^{-1} \right) \\
&= \frac{2^k}{2^{2n}} \sum_{\gamma \in \F_{2^n}^\times} \widetilde{\chi}_\gamma ( x - \alpha, y - \beta + \alpha^3 + x\alpha^2).
\end{align*}
When $x = \alpha$ and $y = \beta$, we quickly see that the diagonal entries are given by
\[ \G_{(x,y), (x,y)} = \frac{2^k}{2^{2n}}\cdot (2^n - 1)\cdot 2^k = \frac{1}{2} - \frac{1}{2^{n+1}}. \]
If $x = \alpha$ and $y \neq \beta$, then \eqref{eq:IHP6} gives us
\[ \G_{(x,y),(x,\beta)} = \frac{2^k}{2^{2n}} \sum_{\gamma \in \F_{2^n}^\times} \widetilde{\chi}_\gamma(0, y - \beta) = - \frac{2^{2k}}{2^{2n}} = -\frac{1}{2^{n+1}}. \]
Finally, when $x \neq \alpha$, \eqref{eq:IHP2} and \eqref{eq:EpAlpha} show that $\widetilde{\chi}_\gamma(x-\alpha,z) = 0$ for all $z\in \F_{2^n}$ and all $\gamma \neq x-\alpha$. Hence,
\begin{align*}
\G_{(x,y),(\alpha,\beta)} &= \frac{2^k}{2^{2n}}\, \widetilde{\chi}_{x - \alpha}( x - \alpha, y - \beta + \alpha^3 + x \alpha^2 ) \\
&= \frac{2^k}{2^{2n}}\, \chi_\pi\left(1, (x-\alpha)^{-3}(y - \beta + \alpha^3 + x \alpha^2) \right). \\
\end{align*}
Since $\pi(1,z) = (-1)^{\tr(z)}\cdot i I$ for $z \in \F_{2^n}$, we conclude that
\[ \G_{(x,y),(\alpha,\beta)} = \frac{i}{2^{n+1}}\cdot (-1)^{\tr\left((x-\alpha)^{-3}(y - \beta + \alpha^3 + x \alpha^2) \right)}. \qedhere \]
\end{proof}

%ACKNOWLEDGMENTS=======================================================================================================

\section*{Acknowledgments}
The authors thank the anonymous referee for thoughtful recommendations that significantly altered and greatly improved the manuscript.

Part of this work was conducted during the SOFT 2016:\ Summer of Frame Theory workshop at the Air Force Institute of Technology.
The authors thank Nathaniel Hammen for helpful discussions during this workshop.
This work was partially supported by NSF DMS 1321779, ARO W911NF-16-1-0008, AFOSR F4FGA05076J002, an AFOSR Young Investigator Research Program award, and an AFRL Summer Faculty Fellowship Program award.
The views expressed in this article are those of the authors and do not reflect the official policy or position of the United States Air Force, Department of Defense, or the U.S.\ Government.

%BIBLIOGRAPHY=============================================================================================================

\bibliographystyle{abbrv}
\bibliography{references}

\vspace{10 pt}

\end{document}